\newtheorem{theorem}{Theorem}
\theoremstyle{plain}
\newtheorem{corollary}{Corollary}
\newtheorem{example}{Example}
\newtheorem{proposition}{Proposition}
\newtheorem{remark}{Remark}
\numberwithin{equation}{section}
\begin{document}
\title[Some Inequalities for Trace Class Operators]{Some Inequalities for
Trace Class Operators Via a Kato's Result}
\author{S.S. Dragomir$^{1,2}$}
\address{$^{1}$Mathematics, School of Engineering \& Science\\
Victoria University, PO Box 14428\\
Melbourne City, MC 8001, Australia.}
\email{sever.dragomir@vu.edu.au}
\urladdr{http://rgmia.org/dragomir}
\address{$^{2}$School of Computational \& Applied Mathematics, University of
the Witwatersrand, Private Bag 3, Johannesburg 2050, South Africa}
\subjclass{47A63; 47A99.}
\keywords{Bounded linear operators, Operator inequalities, Kato's
inequality, Functions of normal operators, Euclidian norm and numerical
radius}

\begin{abstract}
By the use of the celebrated Kato's inequality we obtain in this paper some
new inequalities for trace class operators on a complex Hilbert space $H.$
Natural applications for functions defined by power series of normal
operators are given as well.
\end{abstract}

\maketitle

\section{Introduction}

We denote by $\mathcal{B}\left( H\right) $ the Banach algebra of all bounded
linear operators on a complex Hilbert space $\left( H;\left\langle \cdot
,\cdot \right\rangle \right) .$

If $P$ is a positive selfadjoint operator on $H,$ i.e. $\left\langle
Px,x\right\rangle \geq 0$ for any $x\in H,$ then the following inequality is
a generalization of the Schwarz inequality in $H$%
\begin{equation}
\left\vert \left\langle Px,y\right\rangle \right\vert ^{2}\leq \left\langle
Px,x\right\rangle \left\langle Py,y\right\rangle ,  \label{e.1.1}
\end{equation}%
for any $x,y\in H.$

The following inequality is of interest as well, see \cite[p. 221]{H}.

Let $P$ be a positive selfadjoint operator on $H.$ Then 
\begin{equation}
\left\Vert Px\right\Vert ^{2}\leq \left\Vert P\right\Vert \left\langle
Px,x\right\rangle  \label{e.1.2}
\end{equation}%
for any $x\in H.$

The \textit{"square root"} of a positive bounded selfadjoint operator on $H$
can be defined as follows, see for instance \cite[p. 240]{H}: \textit{If the
operator }$A\in B\left( H\right) $\textit{\ is selfadjoint and positive,
then there exists a unique positive selfadjoint operator }$B:=\sqrt{A}\in
B\left( H\right) $\textit{\ such that }$B^{2}=A.$\textit{\ If }$A$\textit{\
is invertible, then so is }$B.$

If $A\in \mathcal{B}\left( H\right) ,$ then the operator $A^{\ast }A$ is
selfadjoint and positive. Define the \textit{"absolute value"} operator by $%
\left\vert A\right\vert :=\sqrt{A^{\ast }A}.$

In 1952, Kato \cite{K} proved the following celebrated generalization of
Schwarz inequality for any bounded linear operator $T$ on $H$:%
\begin{equation}
\left\vert \left\langle Tx,y\right\rangle \right\vert ^{2}\leq \left\langle
\left( T^{\ast }T\right) ^{\alpha }x,x\right\rangle \left\langle \left(
TT^{\ast }\right) ^{1-\alpha }y,y\right\rangle ,  \label{e.1.3}
\end{equation}%
for any $x,y\in H,$ $\alpha \in \left[ 0,1\right] .$ Utilizing the modulus
notation introduced before, we can write (\ref{e.1.3}) as follows%
\begin{equation}
\left\vert \left\langle Tx,y\right\rangle \right\vert ^{2}\leq \left\langle
\left\vert T\right\vert ^{2\alpha }x,x\right\rangle \left\langle \left\vert
T^{\ast }\right\vert ^{2\left( 1-\alpha \right) }y,y\right\rangle
\label{e.1.4}
\end{equation}%
for any $x,y\in H,$ $\alpha \in \left[ 0,1\right] .$

It is useful to observe that, if $T=N,$ a normal operator, i.e., we recall
that $NN^{\ast }=N^{\ast }N,$ then the inequality (\ref{e.1.4}) can be
written as%
\begin{equation}
\left\vert \left\langle Nx,y\right\rangle \right\vert ^{2}\leq \left\langle
\left\vert N\right\vert ^{2\alpha }x,x\right\rangle \left\langle \left\vert
N\right\vert ^{2\left( 1-\alpha \right) }y,y\right\rangle ,  \label{e.1.5}
\end{equation}%
and in particular, for selfadjoint operators $A$ we can state it as%
\begin{equation}
\left\vert \left\langle Ax,y\right\rangle \right\vert \leq \left\Vert
\left\vert A\right\vert ^{\alpha }x\right\Vert \left\Vert \left\vert
A\right\vert ^{1-\alpha }y\right\Vert  \label{e.1.6}
\end{equation}%
for any $x,y\in H,$ $\alpha \in \left[ 0,1\right] .$

If $T=U,$ a unitary operator, i.e., we recall that $UU^{\ast }=U^{\ast
}U=1_{H},$ then the inequality (\ref{e.1.4}) becomes%
\begin{equation*}
\left\vert \left\langle Ux,y\right\rangle \right\vert \leq \left\Vert
x\right\Vert \left\Vert y\right\Vert
\end{equation*}%
for any $x,y\in H,$ which provides a natural generalization for the Schwarz
inequality in $H.$

The symmetric powers in the inequalities above are natural to be considered,
so if we choose in (\ref{e.1.4}), (\ref{e.1.5}) and in (\ref{e.1.6}) $\alpha
=1/2$ then we get for any $x,y\in H$ 
\begin{equation}
\left\vert \left\langle Tx,y\right\rangle \right\vert ^{2}\leq \left\langle
\left\vert T\right\vert x,x\right\rangle \left\langle \left\vert T^{\ast
}\right\vert y,y\right\rangle ,  \label{e.1.7}
\end{equation}%
\begin{equation}
\left\vert \left\langle Nx,y\right\rangle \right\vert ^{2}\leq \left\langle
\left\vert N\right\vert x,x\right\rangle \left\langle \left\vert
N\right\vert y,y\right\rangle ,  \label{e.1.8}
\end{equation}%
and%
\begin{equation}
\left\vert \left\langle Ax,y\right\rangle \right\vert \leq \left\Vert
\left\vert A\right\vert ^{1/2}x\right\Vert \left\Vert \left\vert
A\right\vert ^{1/2}y\right\Vert  \label{e.1.9}
\end{equation}%
respectively.

It is also worthwhile to observe that, if we take the supremum over $y\in
H,\left\Vert y\right\Vert =1$ in (\ref{e.1.4}) then we get 
\begin{equation}
\left\Vert Tx\right\Vert ^{2}\leq \left\Vert T\right\Vert ^{2\left( 1-\alpha
\right) }\left\langle \left\vert T\right\vert ^{2\alpha }x,x\right\rangle
\label{e.1.10}
\end{equation}%
for any $x\in H,$ or in an equivalent form%
\begin{equation}
\left\Vert Tx\right\Vert \leq \left\Vert \left\vert T\right\vert ^{\alpha
}x\right\Vert \left\Vert T\right\Vert ^{1-\alpha }  \label{e.1.11}
\end{equation}%
for any $x\in H.$

If we take $\alpha =1/2$ \ in (\ref{e.1.10}), then we get%
\begin{equation}
\left\Vert Tx\right\Vert ^{2}\leq \left\Vert T\right\Vert \left\langle
\left\vert T\right\vert x,x\right\rangle  \label{e.1.12}
\end{equation}%
for any $x\in H,$ which in the particular case of $T=P,$ a positive
operator, provides the result from (\ref{e.1.2}).

For various interesting generalizations, extension and Kato related results,
see the papers \cite{FLN}-\cite{F3}, \cite{L}-\cite{M} and \cite{U}.

In order to state our results concerning new trace inequalities for
operators in Hilbert spaces we need some preliminary facts as follows.

\section{Trace of Operators}

Let $\left( H,\left\langle \cdot ,\cdot \right\rangle \right) $ be a complex
Hilbert space and $\left\{ e_{i}\right\} _{i\in I}$ an \textit{orthonormal
basis} of $H.$ We say that $A\in \mathcal{B}\left( H\right) $ is a \textit{%
Hilbert-Schmidt operator} if%
\begin{equation}
\sum_{i\in I}\left\Vert Ae_{i}\right\Vert ^{2}<\infty .  \label{e.1.1.a}
\end{equation}%
It is well know that, if $\left\{ e_{i}\right\} _{i\in I}$ and $\left\{
f_{j}\right\} _{j\in J}$ are orthonormal bases for $H$ and $A\in \mathcal{B}%
\left( H\right) $ then%
\begin{equation}
\sum_{i\in I}\left\Vert Ae_{i}\right\Vert ^{2}=\sum_{j\in I}\left\Vert
Af_{j}\right\Vert ^{2}=\sum_{j\in I}\left\Vert A^{\ast }f_{j}\right\Vert ^{2}
\label{e.1.2.a}
\end{equation}%
showing that the definition (\ref{e.1.1.a}) is independent of the
orthonormal basis and $A$ is a Hilbert-Schmidt operator iff $A^{\ast }$ is a
Hilbert-Schmidt operator.

Let $\mathcal{B}_{2}\left( H\right) $ the set of Hilbert-Schmidt operators
in $\mathcal{B}\left( H\right) .$ For $A\in \mathcal{B}_{2}\left( H\right) $
we define%
\begin{equation}
\left\Vert A\right\Vert _{2}:=\left( \sum_{i\in I}\left\Vert
Ae_{i}\right\Vert ^{2}\right) ^{1/2}  \label{e.1.3.1}
\end{equation}%
for $\left\{ e_{i}\right\} _{i\in I}$ an orthonormal basis of $H.$ This
definition does not depend on the choice of the orthonormal basis.

Using the triangle inequality in $l^{2}\left( I\right) ,$one checks that $%
\mathcal{B}_{2}\left( H\right) $ is a \textit{vector space} and that $%
\left\Vert \cdot \right\Vert _{2}$ is a norm on $\mathcal{B}_{2}\left(
H\right) ,$ which is usually called in the literature as the \textit{%
Hilbert-Schmidt norm}.

Denote \textit{the modulus} of an operator $A\in \mathcal{B}\left( H\right) $
by $\left\vert A\right\vert :=\left( A^{\ast }A\right) ^{1/2}.$

Because $\left\Vert \left\vert A\right\vert x\right\Vert =\left\Vert
Ax\right\Vert $ for all $x\in H,$ $A$ is Hilbert-Schmidt iff $\left\vert
A\right\vert $ is Hilbert-Schmidt and $\left\Vert A\right\Vert
_{2}=\left\Vert \left\vert A\right\vert \right\Vert _{2}.$ From (\ref%
{e.1.2.a}) we have that if $A\in \mathcal{B}_{2}\left( H\right) ,$ then $%
A^{\ast }\in \mathcal{B}_{2}\left( H\right) $ and $\left\Vert A\right\Vert
_{2}=\left\Vert A^{\ast }\right\Vert _{2}.$

The following theorem collects some of the most important properties of
Hilbert-Schmidt operators:

\begin{theorem}
\label{t.1.1}We have

(i) $\left( \mathcal{B}_{2}\left( H\right) ,\left\Vert \cdot \right\Vert
_{2}\right) $ is a Hilbert space with inner product 
\begin{equation}
\left\langle A,B\right\rangle _{2}:=\sum_{i\in I}\left\langle
Ae_{i},Be_{i}\right\rangle =\sum_{i\in I}\left\langle B^{\ast
}Ae_{i},e_{i}\right\rangle  \label{e.1.4.1}
\end{equation}%
and the definition does not depend on the choice of the orthonormal basis $%
\left\{ e_{i}\right\} _{i\in I}$;

(ii) We have the inequalities 
\begin{equation}
\left\Vert A\right\Vert \leq \left\Vert A\right\Vert _{2}  \label{e.1.4.a}
\end{equation}
for any $A\in \mathcal{B}_{2}\left( H\right) $ and 
\begin{equation}
\left\Vert AT\right\Vert _{2},\left\Vert TA\right\Vert _{2}\leq \left\Vert
T\right\Vert \left\Vert A\right\Vert _{2}  \label{e.1.4.b}
\end{equation}%
for any $A\in \mathcal{B}_{2}\left( H\right) $ and $T\in \mathcal{B}\left(
H\right) ;$

(iii) $\mathcal{B}_{2}\left( H\right) $ is an operator ideal in $\mathcal{B}%
\left( H\right) ,$ i.e. 
\begin{equation*}
\mathcal{B}\left( H\right) \mathcal{B}_{2}\left( H\right) \mathcal{B}\left(
H\right) \subseteq \mathcal{B}_{2}\left( H\right) ;
\end{equation*}

(iv) $\mathcal{B}_{fin}\left( H\right) ,$ the space of operators of finite
rank, is a dense subspace of $\mathcal{B}_{2}\left( H\right) ;$

(v) $\mathcal{B}_{2}\left( H\right) \subseteq \mathcal{K}\left( H\right) ,$
where $\mathcal{K}\left( H\right) $ denotes the algebra of compact operators
on $H.$
\end{theorem}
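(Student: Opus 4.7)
The plan is to establish each item in turn, leaning on the fact, already recorded in the excerpt, that $\sum_{i\in I}\|Ae_i\|^2$ is basis-independent and equals $\sum_{j\in I}\|A^*f_j\|^2$.

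For part (i), I would first show that the sum defining $\langle A,B\rangle_2$ converges absolutely: by Cauchy--Schwarz in $\ell^2(I)$,
\[
\sum_{i\in I}|\langle Ae_i,Be_i\rangle|\leq \Big(\sum_{i\in I}\|Ae_i\|^2\Big)^{1/2}\Big(\sum_{i\in I}\|Be_i\|^2\Big)^{1/2}=\|A\|_2\|B\|_2,
\]
which also gives $|\langle A,B\rangle_2|\leq\|A\|_2\|B\|_2$. Sesquilinearity and positive definiteness are immediate, and $\langle A,A\rangle_2=\|A\|_2^2$. Independence of basis follows by polarization from the corresponding statement for $\|A\|_2$. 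For completeness, I would take a Cauchy sequence $(A_n)$ in $\|\cdot\|_2$; since (ii) gives $\|A_n-A_m\|\leq\|A_n-A_m\|_2$, the sequence is Cauchy in operator norm, so $A_n\to A$ in $\mathcal{B}(H)$. Then for each finite subset $F\subset I$, $\sum_{i\in F}\|(A-A_m)e_i\|^2=\lim_{n}\sum_{i\in F}\|(A_n-A_m)e_i\|^2\leq\limsup_n\|A_n-A_m\|_2^2$, and letting $F\uparrow I$ then $m\to\infty$ shows $A\in\mathcal{B}_2(H)$ and $\|A-A_m\|_2\to 0$.

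For part (ii), given a unit $x\in H$ I would extend $\{x\}$ to an orthonormal basis to obtain $\|Ax\|^2\leq\sum_i\|Ae_i\|^2=\|A\|_2^2$, giving $\|A\|\leq\|A\|_2$. For the ideal-type bound, the estimate $\|TA\|_2^2=\sum_i\|TAe_i\|^2\leq\|T\|^2\sum_i\|Ae_i\|^2=\|T\|^2\|A\|_2^2$ is immediate; the bound on $\|AT\|_2$ follows by taking adjoints, using $\|B\|_2=\|B^*\|_2$ from (\ref{e.1.2.a}), since $\|AT\|_2=\|T^*A^*\|_2\leq\|T^*\|\|A^*\|_2=\|T\|\|A\|_2$. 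Item (iii) is then a direct consequence of (ii) applied on both sides.

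For part (iv), let $\{e_i\}_{i\in I}$ be an orthonormal basis and, for each finite $F\subset I$, write $P_F$ for the orthogonal projection onto $\mathrm{span}\{e_i:i\in F\}$. Then $AP_F$ has finite rank, and
\[
\|A-AP_F\|_2^2=\sum_{i\notin F}\|Ae_i\|^2,
\]
which tends to $0$ as $F$ exhausts $I$ since $\sum_{i\in I}\|Ae_i\|^2<\infty$. This shows $\mathcal{B}_{fin}(H)$ is $\|\cdot\|_2$-dense in $\mathcal{B}_2(H)$. Finally, for part (v), given $A\in\mathcal{B}_2(H)$ choose a sequence of finite-rank $A_n$ with $\|A-A_n\|_2\to 0$; by (ii), $\|A-A_n\|\to 0$, so $A$ is a norm limit of finite-rank operators and therefore compact. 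The main obstacle is the completeness argument in (i): one has to pass from operator-norm convergence back to Hilbert--Schmidt convergence, which requires the Fatou-type interchange of limit and partial sum carried out above; everything else is a routine consequence of Cauchy--Schwarz in $\ell^2(I)$ and the adjoint symmetry in (\ref{e.1.2.a}).
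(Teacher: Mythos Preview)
The paper does not supply a proof of this theorem; it is stated as background material (``The following theorem collects some of the most important properties of Hilbert-Schmidt operators'') and then used without further justification. So there is no paper proof to compare against.

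Your argument is correct and follows the standard textbook route. Two minor remarks. First, in (i) you invoke the inequality $\|A\|\leq\|A\|_2$ from (ii) to run the completeness argument; this is a harmless forward reference since your proof of that inequality in (ii) does not rely on anything from (i), but you might reorder the exposition or flag this explicitly. Second, in (iv) you implicitly use that $\mathcal{B}_{fin}(H)\subseteq\mathcal{B}_2(H)$; this is clear (choose a basis containing an orthonormal basis of $\ker A$, or simply note that your $AP_F$ are visibly Hilbert--Schmidt from the computation), but it does no harm to say so in one line. Otherwise the Fatou-type interchange for completeness, the adjoint trick for $\|AT\|_2$, and the compactness-via-finite-rank-approximation argument are all exactly what one expects.
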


If $\left\{ e_{i}\right\} _{i\in I}$ an orthonormal basis of $H,$ we say
that $A\in \mathcal{B}\left( H\right) $ is \textit{trace class} if 
\begin{equation}
\left\Vert A\right\Vert _{1}:=\sum_{i\in I}\left\langle \left\vert
A\right\vert e_{i},e_{i}\right\rangle <\infty .  \label{e.1.5.1}
\end{equation}%
The definition of $\left\Vert A\right\Vert _{1}$ does not depend on the
choice of the orthonormal basis $\left\{ e_{i}\right\} _{i\in I}.$ We denote
by $\mathcal{B}_{1}\left( H\right) $ the set of trace class operators in $%
\mathcal{B}\left( H\right) .$

The following proposition holds:

\begin{proposition}
\label{p.1.1}If $A\in \mathcal{B}\left( H\right) ,$ then the following are
equivalent:

(i) $A\in \mathcal{B}_{1}\left( H\right) ;$

(ii) $\left\vert A\right\vert ^{1/2}\in \mathcal{B}_{2}\left( H\right) ;$

(ii) $A$ (or $\left\vert A\right\vert )$ is the product of two elements of $%
\mathcal{B}_{2}\left( H\right) .$
\end{proposition}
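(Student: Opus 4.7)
The plan is to prove the circular chain (i) $\Leftrightarrow$ (ii) $\Rightarrow$ (iii) $\Rightarrow$ (i), relying on the polar decomposition $A = U|A|$ (where $U$ is the partial isometry with initial space $\overline{\operatorname{Range}|A|}$) together with the ideal property and submultiplicativity of $\|\cdot\|_2$ from Theorem~\ref{t.1.1}.

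The equivalence (i) $\Leftrightarrow$ (ii) is the direct algebraic observation that since $|A|^{1/2}$ is positive and selfadjoint, one has $\langle |A|e_i,e_i\rangle = \langle |A|^{1/2}|A|^{1/2}e_i,e_i\rangle = \||A|^{1/2}e_i\|^2$ for each $i$; summing over $i$ yields $\|A\|_1 = \||A|^{1/2}\|_2^{2}$, so one sum is finite iff the other is. For (ii) $\Rightarrow$ (iii), assuming $|A|^{1/2}\in \mathcal{B}_{2}(H)$, the identity $|A| = |A|^{1/2}\cdot |A|^{1/2}$ already displays $|A|$ as a product of two Hilbert--Schmidt operators. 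For $A$ itself, I would invoke the polar decomposition and write $A = (U|A|^{1/2})\cdot |A|^{1/2}$; by Theorem~\ref{t.1.1}(iii) (the ideal property, equivalently (\ref{e.1.4.b})) the left factor $U|A|^{1/2}$ belongs to $\mathcal{B}_{2}(H)$.

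For (iii) $\Rightarrow$ (i), suppose $A = BC$ with $B,C\in \mathcal{B}_{2}(H)$. Using polar decomposition $A = U|A|$ and the fact that $U^{\ast}U$ acts as the identity on $\overline{\operatorname{Range}|A|}$, one obtains $|A| = U^{\ast}A = U^{\ast}BC$. Then for any orthonormal basis $\{e_i\}_{i\in I}$,
\begin{equation*}
\langle |A|e_i, e_i\rangle = \langle U^{\ast}BCe_i, e_i\rangle = \langle Ce_i, B^{\ast}Ue_i\rangle,
\end{equation*}
and summing over $i$ together with the Cauchy--Schwarz inequality in $\ell^{2}(I)$ gives
\begin{equation*}
\sum_{i\in I}\langle |A|e_i,e_i\rangle \leq \|C\|_{2}\,\|B^{\ast}U\|_{2} \leq \|C\|_{2}\,\|B\|_{2} < \infty,
\end{equation*}
where the last bound uses (\ref{e.1.4.b}) and the fact $\|B^{\ast}\|_{2}=\|B\|_{2}$. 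The case where $|A|$ (rather than $A$) is assumed to be the product reduces to taking $U = I_{H}$.

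The one genuine wrinkle, and the step I expect to be the main obstacle, is the handling of the polar decomposition in the direction (iii) $\Rightarrow$ (i) when it is $A$ (not $|A|$) that is factored: one must justify that $U^{\ast}A = |A|$ rather than merely $U^{\ast}U|A|$, i.e., verify that $|A|$ takes values in the initial space of $U$. Apart from this, every remaining step is a routine application of the Hilbert--Schmidt ideal estimates already recorded in Theorem~\ref{t.1.1}.
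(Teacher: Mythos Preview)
The paper states Proposition~\ref{p.1.1} as a known preliminary fact in the background section and does not supply a proof, so there is nothing to compare against. Your argument is correct and is essentially the standard one found in textbooks on trace ideals. The ``wrinkle'' you flag is not an obstacle: since $U^{\ast}U$ is the orthogonal projection onto the initial space $\overline{\operatorname{Range}|A|}$ and every vector $|A|x$ lies in $\operatorname{Range}|A|$, one has $U^{\ast}U|A|=|A|$ automatically, hence $U^{\ast}A=|A|$ as needed.
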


The following properties are also well known:

\begin{theorem}
\label{t.1.2}With the above notations:

(i) We have 
\begin{equation}
\left\Vert A\right\Vert _{1}=\left\Vert A^{\ast }\right\Vert _{1}\text{ and }%
\left\Vert A\right\Vert _{2}\leq \left\Vert A\right\Vert _{1}
\label{e.1.6.1}
\end{equation}%
for any $A\in \mathcal{B}_{1}\left( H\right) ;$

(ii) $\mathcal{B}_{1}\left( H\right) $ is an operator ideal in $\mathcal{B}%
\left( H\right) ,$ i.e. 
\begin{equation*}
\mathcal{B}\left( H\right) \mathcal{B}_{1}\left( H\right) \mathcal{B}\left(
H\right) \subseteq \mathcal{B}_{1}\left( H\right) ;
\end{equation*}

(iii) We have%
\begin{equation*}
\mathcal{B}_{2}\left( H\right) \mathcal{B}_{2}\left( H\right) =\mathcal{B}%
_{1}\left( H\right) ;
\end{equation*}

(iv) We have%
\begin{equation*}
\left\Vert A\right\Vert _{1}=\sup \left\{ \left\langle A,B\right\rangle _{2}%
\text{ }|\text{ }B\in \mathcal{B}_{2}\left( H\right) ,\text{ }\left\Vert
B\right\Vert \leq 1\right\} ;
\end{equation*}

(v) $\left( \mathcal{B}_{1}\left( H\right) ,\left\Vert \cdot \right\Vert
_{1}\right) $ is a Banach space.

(iv) We have the following isometric isomorphisms%
\begin{equation*}
\mathcal{B}_{1}\left( H\right) \cong K\left( H\right) ^{\ast }\text{ and }%
\mathcal{B}_{1}\left( H\right) ^{\ast }\cong \mathcal{B}\left( H\right) ,
\end{equation*}%
where $K\left( H\right) ^{\ast }$ is the dual space of $K\left( H\right) $
and $\mathcal{B}_{1}\left( H\right) ^{\ast }$ is the dual space of $\mathcal{%
B}_{1}\left( H\right) .$
\end{theorem}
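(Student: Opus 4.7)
The plan rests on two tools: the polar decomposition $A=U|A|$ with $U$ a partial isometry having initial space $\overline{\mathrm{Ran}(|A|)}$, and the factorization given by Proposition~\ref{p.1.1}, which yields the key identity $\||A|^{1/2}\|_2^2=\sum_{i\in I}\langle|A|e_i,e_i\rangle=\|A\|_1$.

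Parts (i)--(iii) follow by short applications of these tools. For $\|A\|_2\le\|A\|_1$ I would write $|A|=|A|^{1/2}|A|^{1/2}$ and apply Theorem~\ref{t.1.1}(ii) twice: $\|A\|_2=\||A|\|_2\le\||A|^{1/2}\|\,\||A|^{1/2}\|_2\le\||A|^{1/2}\|_2^{2}=\|A\|_1$. For $\|A\|_1=\|A^*\|_1$, verify $|A^*|=U|A|U^*$ by squaring and uniqueness of the positive square root, then use that $U$ is isometric on $\overline{\mathrm{Ran}(|A|)}$ to equate the two sums $\sum_i\langle |A^*|e_i,e_i\rangle$ and $\sum_i\langle|A|e_i,e_i\rangle$. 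Part (ii) is immediate from the factorization $A=B_1B_2$ with $B_j\in\mathcal{B}_2(H)$: then $SAT=(SB_1)(B_2T)$ and each factor remains Hilbert--Schmidt by Theorem~\ref{t.1.1}(ii). Part (iii) requires only the converse inclusion $\mathcal{B}_2(H)\mathcal{B}_2(H)\subseteq\mathcal{B}_1(H)$, proved by Cauchy--Schwarz on $\sum_i\langle|B_1B_2|e_i,e_i\rangle=\sum_i\langle B_2e_i,B_1^*We_i\rangle\le\|B_2\|_2\|B_1^*\|_2$, where $W$ is the partial isometry in $B_1B_2=W|B_1B_2|$.

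For (iv), the upper bound is $|\langle A,B\rangle_2|\le\|A\|_1\|B\|$, obtained by writing $\langle A,B\rangle_2=\sum_i\langle|A|^{1/2}e_i,|A|^{1/2}U^*Be_i\rangle$, applying Cauchy--Schwarz in $\ell^2(I)$, and invoking Theorem~\ref{t.1.1}(ii) to get $\||A|^{1/2}U^*B\|_2\le\|B\|\,\||A|^{1/2}\|_2$. For the matching lower bound I would take $B_n:=UP_n$ with $P_n$ the orthogonal projection onto $\mathrm{span}\{e_1,\dots,e_n\}$; these are finite-rank, satisfy $\|B_n\|\le 1$, and using $U^*U|A|=|A|$ one computes $\langle A,B_n\rangle_2=\sum_{i\le n}\langle|A|e_i,e_i\rangle\nearrow\|A\|_1$. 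Completeness in (v) is a standard Cauchy-sequence argument: a $\|\cdot\|_1$-Cauchy sequence is $\|\cdot\|$-Cauchy by the inequality $\|\cdot\|\le\|\cdot\|_2\le\|\cdot\|_1$, hence converges in operator norm to some $A$, and (iv) controls $\|A-A_n\|_1$ uniformly.

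The main obstacle is the pair of isometric isomorphisms at the end. For $K(H)^*\cong\mathcal{B}_1(H)$ one defines $\Phi:\mathcal{B}_1(H)\to K(H)^*$ by $\Phi(A)(K):=\langle K,A^*\rangle_2$; the isometric property follows from (iv), extended from $B\in\mathcal{B}_2(H)$ to $K\in K(H)$ by density of finite-rank operators in $K(H)$ together with the continuity of $K\mapsto\langle K,A^*\rangle_2$ in operator norm. Surjectivity is the delicate step: given $\varphi\in K(H)^*$, one reconstructs a candidate $A$ from the matrix entries $\langle Ae_j,e_i\rangle:=\varphi(\langle\cdot,e_i\rangle e_j)$, then verifies $A\in\mathcal{B}_1(H)$ with $\|A\|_1\le\|\varphi\|$ by testing against finite-rank $B_n$ as above. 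The analogue $\mathcal{B}_1(H)^*\cong\mathcal{B}(H)$ runs in parallel, with the candidate operator now only bounded rather than compact, so the matrix-entry reconstruction must be controlled directly via the norm $\|\cdot\|_1$ on rank-one trace-class operators.
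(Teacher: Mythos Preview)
The paper does not prove this theorem at all: it is stated as background (``The following properties are also well known'') with no argument given, so there is nothing to compare your proposal against line by line. Your outline is a reasonable sketch of the standard proofs one finds in references such as Simon's \emph{Trace Ideals and Their Applications}, which the paper cites for this material.

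A few small points on the proposal itself. In (i), your chain $\|A\|_2=\||A|\|_2\le\||A|^{1/2}\|\,\||A|^{1/2}\|_2\le\||A|^{1/2}\|_2^{2}$ tacitly uses $\||A|^{1/2}\|\le\||A|^{1/2}\|_2$, which is Theorem~\ref{t.1.1}(ii) (the inequality $\|\cdot\|\le\|\cdot\|_2$); you might flag that explicitly. In (iv), the lower-bound witnesses $B_n=UP_n$ presuppose a countable basis adapted to $|A|$; since $A$ is compact this is harmless, but in the general (possibly nonseparable) $H$ of the paper you should remark that $|A|$ has separable range so such a basis exists on the relevant subspace. For the duality statements your surjectivity sketch via matrix entries is the standard route; the verification that the reconstructed operator is bounded (respectively trace class) with the correct norm estimate is where the real work lies, and you have correctly identified that as the delicate step rather than swept it under the rug.
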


We define the \textit{trace} of a trace class operator $A\in \mathcal{B}%
_{1}\left( H\right) $ to be%
\begin{equation}
\limfunc{tr}\left( A\right) :=\sum_{i\in I}\left\langle
Ae_{i},e_{i}\right\rangle ,  \label{e.1.7.1}
\end{equation}%
where $\left\{ e_{i}\right\} _{i\in I}$ an orthonormal basis of $H.$ Note
that this coincides with the usual definition of the trace if $H$ is
finite-dimensional. We observe that the series (\ref{e.1.7.1}) converges
absolutely and it is independent from the choice of basis.

The following result collects some properties of the trace:

\begin{theorem}
\label{t.3.1}We have

(i) If $A\in \mathcal{B}_{1}\left( H\right) $ then $A^{\ast }\in \mathcal{B}%
_{1}\left( H\right) $ and 
\begin{equation}
\limfunc{tr}\left( A^{\ast }\right) =\overline{\limfunc{tr}\left( A\right) };
\label{e.1.8.1}
\end{equation}

(ii) If $A\in \mathcal{B}_{1}\left( H\right) $ and $T\in \mathcal{B}\left(
H\right) ,$ then $AT,$ $TA\in \mathcal{B}_{1}\left( H\right) $ and%
\begin{equation}
\limfunc{tr}\left( AT\right) =\limfunc{tr}\left( TA\right) \text{ and }%
\left\vert \limfunc{tr}\left( AT\right) \right\vert \leq \left\Vert
A\right\Vert _{1}\left\Vert T\right\Vert ;  \label{e.1.9.1}
\end{equation}

(iii) $\limfunc{tr}\left( \cdot \right) $ is a bounded linear functional on $%
\mathcal{B}_{1}\left( H\right) $ with $\left\Vert \limfunc{tr}\right\Vert
=1; $

(iv) If $A,$ $B\in \mathcal{B}_{2}\left( H\right) $ then $AB,$ $BA\in 
\mathcal{B}_{1}\left( H\right) $ and $\limfunc{tr}\left( AB\right) =\limfunc{%
tr}\left( BA\right) ;$

(v) $\mathcal{B}_{fin}\left( H\right) $ is a dense subspace of $\mathcal{B}%
_{1}\left( H\right) .$
\end{theorem}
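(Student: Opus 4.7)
The plan is to dispatch the five parts in an order that exposes their logical dependencies: prove (iv) first as the engine, leverage it for (ii), and then handle (i), (iii) and (v) separately. Several membership assertions are already immediate from Theorems~\ref{t.1.1}--\ref{t.1.2} and Proposition~\ref{p.1.1}, so the real content lies in the two cyclicity statements, the operator-norm bound, the computation $\|\limfunc{tr}\|=1$, and the density claim.

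For (iv), the inclusion $AB,BA\in\mathcal{B}_1(H)$ is Theorem~\ref{t.1.2}(iii). To obtain the cyclic identity I would fix an orthonormal basis $\{e_i\}_{i\in I}$, insert the resolution of the identity inside the trace to get
\[
\limfunc{tr}(AB)=\sum_{i,j}\langle Be_i,e_j\rangle\langle Ae_j,e_i\rangle,
\]
and then argue absolute convergence via a Cauchy--Schwarz estimate bounded by $\|A\|_2\|B\|_2<\infty$; Fubini permits swapping the order of summation and the result is immediately recognised as $\limfunc{tr}(BA)$.

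For (ii), the inclusions $AT,TA\in\mathcal{B}_1(H)$ are Theorem~\ref{t.1.2}(ii). Proposition~\ref{p.1.1} lets me factor $A=CD$ with $C,D\in\mathcal{B}_2(H)$, and Theorem~\ref{t.1.1}(ii) keeps $DT$ and $TC$ in $\mathcal{B}_2(H)$. Two applications of (iv) then collapse both $\limfunc{tr}(AT)$ and $\limfunc{tr}(TA)$ to the common value $\limfunc{tr}(DTC)$. For the operator-norm bound I would specialise to the polar choice $C=U|A|^{1/2}$, $D=|A|^{1/2}$, which yields $\|C\|_2=\|D\|_2=\|A\|_1^{1/2}$, and combine the inner-product representation of the trace on $\mathcal{B}_2(H)$ with Cauchy--Schwarz and the submultiplicativity~(\ref{e.1.4.b}) to get $|\limfunc{tr}(AT)|\le\|T\|\|A\|_1$.

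The remaining parts are short. For (i), $A^{\ast}\in\mathcal{B}_1(H)$ is Theorem~\ref{t.1.2}(i), while $\langle A^{\ast}e_i,e_i\rangle=\overline{\langle Ae_i,e_i\rangle}$ term-by-term gives the conjugate identity. For (iii), linearity is immediate from the series definition, $\|\limfunc{tr}\|\le1$ is (ii) with $T=I_H$, and equality is witnessed by any rank-one projection $x\mapsto\langle x,e\rangle e$ with $\|e\|=1$. For (v), combining Theorem~\ref{t.1.1}(v) with the inequality $\|A\|_2\le\|A\|_1$ from Theorem~\ref{t.1.2}(i) places $|A|$ in $\mathcal{K}(H)$, so it admits a spectral expansion $|A|=\sum_n s_n\langle\cdot,\varphi_n\rangle\varphi_n$ with $s_n\ge0$ and $\sum_n s_n=\|A\|_1$; truncating the polar decomposition $A=U|A|$ after $N$ terms produces finite-rank $A_N$ with $\|A-A_N\|_1\le\sum_{n>N}s_n\to0$. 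The sole genuine obstacle is the absolute-convergence/Fubini step in (iv) together with the matching identification $\|C\|_2=\|D\|_2=\|A\|_1^{1/2}$ in (ii) that converts the cyclic identity into the operator-norm bound; all else is bookkeeping on top of the previously established machinery.
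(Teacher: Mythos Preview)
The paper does not actually prove Theorem~\ref{t.3.1}: it is stated in Section~2 as preliminary background material, immediately followed by the remark ``Utilising the trace notation we obviously have\ldots'' and a general reference to Simon~\cite{Si} for the theory of trace functionals. There is therefore no proof in the paper against which your proposal can be compared.

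That said, your proposal is a correct and entirely standard treatment. The logical ordering (prove (iv) first via Fubini on the double sum, bootstrap (ii) from it through a $\mathcal{B}_2$-factorisation of $A$, then handle (i), (iii), (v) separately) is the usual textbook route. The only places where care is genuinely needed are both handled: the absolute convergence in (iv) does follow from Cauchy--Schwarz bounding the double sum by $\|A\|_2\|B\|_2$, and the polar choice $C=U|A|^{1/2}$, $D=|A|^{1/2}$ in (ii) does give $\|C\|_2=\|D\|_2=\|A\|_1^{1/2}$ since $U^{\ast}U$ acts as the identity on the range of $|A|^{1/2}$. One small point worth making explicit in (v): to conclude $\|A-A_N\|_1=\sum_{n>N}s_n$ you are implicitly using that $|A-A_N|=\sum_{n>N}s_n\langle\cdot,\varphi_n\rangle\varphi_n$, which requires the observation that $U$ restricted to the span of the remaining $\varphi_n$ is still isometric; this is true but deserves a sentence.
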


Utilising the trace notation we obviously have that 
\begin{equation*}
\left\langle A,B\right\rangle _{2}=\limfunc{tr}\left( B^{\ast }A\right) =%
\limfunc{tr}\left( AB^{\ast }\right) \text{ and }\left\Vert A\right\Vert
_{2}^{2}=\limfunc{tr}\left( A^{\ast }A\right) =\limfunc{tr}\left( \left\vert
A\right\vert ^{2}\right)
\end{equation*}%
for any $A,$ $B\in \mathcal{B}_{2}\left( H\right) .$

For the theory of trace functionals and their applications the reader is
referred to \cite{Si}.

For some classical trace inequalities see \cite{Ch}, \cite{C}, \cite{N} and 
\cite{Y1}, which are continuations of the work of Bellman \cite{B}. For
related works the reader can refer to \cite{A}, \cite{BJL}, \cite{Ch}, \cite%
{FL}, \cite{Le}, \cite{Li}, \cite{Ma}, \cite{SA0} and \cite{UT}.

\section{Trace Inequalities Via Kato's Result}

We start with the following result:

\begin{theorem}
\label{t.2.1}Let $T\in \mathcal{B}\left( H\right) .$

(i) If for some $\alpha \in \left( 0,1\right) $ we have $\left\vert
T\right\vert ^{2\alpha },$ $\left\vert T^{\ast }\right\vert ^{2\left(
1-\alpha \right) }\in \mathcal{B}_{1}\left( H\right) ,$ then $T\in \mathcal{B%
}_{1}\left( H\right) $ and we have the inequality%
\begin{equation}
\left\vert \limfunc{tr}\left( T\right) \right\vert ^{2}\leq \limfunc{tr}%
\left( \left\vert T\right\vert ^{2\alpha }\right) \limfunc{tr}\left(
\left\vert T^{\ast }\right\vert ^{2\left( 1-\alpha \right) }\right) ;
\label{e.2.1}
\end{equation}

(ii) If for some $\alpha \in \left[ 0,1\right] $ and an orthonormal basis $%
\left\{ e_{i}\right\} _{i\in I}$ the sum 
\begin{equation*}
\sum_{i\in I}\left\Vert Te_{i}\right\Vert ^{\alpha }\left\Vert T^{\ast
}e_{i}\right\Vert ^{1-\alpha }
\end{equation*}
is finite, then $T\in \mathcal{B}_{1}\left( H\right) $ and we have the
inequality%
\begin{equation}
\left\vert \limfunc{tr}\left( T\right) \right\vert \leq \sum_{i\in
I}\left\Vert Te_{i}\right\Vert ^{\alpha }\left\Vert T^{\ast
}e_{i}\right\Vert ^{1-\alpha }.  \label{e.2.2}
\end{equation}

Moreover, if the sums $\sum_{i\in I}\left\Vert Te_{i}\right\Vert $ and $%
\sum_{i\in I}\left\Vert T^{\ast }e_{i}\right\Vert $ are finite for an
orthonormal basis $\left\{ e_{i}\right\} _{i\in I},$ then $T\in \mathcal{B}%
_{1}\left( H\right) $ and we have%
\begin{equation}
\left\vert \limfunc{tr}\left( T\right) \right\vert \leq \inf_{\alpha \in %
\left[ 0,1\right] }\left\{ \sum_{i\in I}\left\Vert Te_{i}\right\Vert
^{\alpha }\left\Vert T^{\ast }e_{i}\right\Vert ^{1-\alpha }\right\} \leq
\min \left\{ \sum_{i\in F}\left\Vert Te_{i}\right\Vert ,\sum_{i\in
F}\left\Vert T^{\ast }e_{i}\right\Vert \right\} .  \label{e.2.2.1}
\end{equation}
\end{theorem}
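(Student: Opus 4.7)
\medskip

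\noindent\textbf{Plan for part (i).} The plan is to instantiate Kato's inequality (\ref{e.1.4}) on the diagonal $x=y=e_i$ for an orthonormal basis $\{e_i\}_{i\in I}$, which yields the pointwise estimate
\[
|\langle Te_i,e_i\rangle|^{2}\leq \langle |T|^{2\alpha}e_i,e_i\rangle\,\langle |T^{\ast}|^{2(1-\alpha)}e_i,e_i\rangle.
\]
Taking square roots, summing over $i$, and applying the ordinary Cauchy--Schwarz inequality in $\ell^{2}(I)$ produces
\[
\sum_{i\in I}|\langle Te_i,e_i\rangle|\leq \bigl(\operatorname{tr}(|T|^{2\alpha})\bigr)^{1/2}\bigl(\operatorname{tr}(|T^{\ast}|^{2(1-\alpha)})\bigr)^{1/2},
\]
which is finite by hypothesis and already gives the numerical bound (\ref{e.2.1}) once $\operatorname{tr}(T)$ is known to equal $\sum_{i}\langle Te_i,e_i\rangle$.

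\medskip

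\noindent To secure $T\in\mathcal{B}_{1}(H)$, I would invoke Proposition~\ref{p.1.1}: from $|T|^{2\alpha}\in\mathcal{B}_{1}(H)$ one gets $|T|^{\alpha}\in\mathcal{B}_{2}(H)$, and similarly $|T^{\ast}|^{1-\alpha}\in\mathcal{B}_{2}(H)$. Writing the polar decomposition $T=U|T|$ and using the functional-calculus identity $|T^{\ast}|^{1-\alpha}U=U|T|^{1-\alpha}$ (which follows from $|T^{\ast}|=U|T|U^{\ast}$), I obtain the factorization $T=(|T^{\ast}|^{1-\alpha}U)\cdot|T|^{\alpha}$ as a product of two Hilbert--Schmidt operators. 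Theorem~\ref{t.1.2}(iii) then gives $T\in\mathcal{B}_{1}(H)$, so $\operatorname{tr}(T)=\sum_i\langle Te_i,e_i\rangle$ converges absolutely and independently of the basis, and $|\operatorname{tr}(T)|\leq \sum_i|\langle Te_i,e_i\rangle|$ combined with the Cauchy--Schwarz bound above yields (\ref{e.2.1}) after squaring.

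\medskip

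\noindent\textbf{Plan for part (ii).} Here the pointwise estimate I need is
\[
|\langle Te_i,e_i\rangle|\leq \|Te_i\|^{\alpha}\|T^{\ast}e_i\|^{1-\alpha},
\]
which I would derive by splitting $|\langle Te_i,e_i\rangle|=|\langle Te_i,e_i\rangle|^{\alpha}\cdot|\langle e_i,T^{\ast}e_i\rangle|^{1-\alpha}$ and applying the ordinary Schwarz inequality to each factor (alternatively, this follows by combining Kato's inequality with the operator Jensen inequality $\langle P^{\alpha}e_i,e_i\rangle\leq\langle Pe_i,e_i\rangle^{\alpha}$ for positive $P$ and $\alpha\in[0,1]$). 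Summing gives $\sum_i|\langle Te_i,e_i\rangle|\leq\sum_i\|Te_i\|^{\alpha}\|T^{\ast}e_i\|^{1-\alpha}<\infty$, which is the target estimate (\ref{e.2.2}) once $T$ is known to be trace class.

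\medskip

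\noindent For the trace-class conclusion, the cleanest route is via the endpoint $\alpha\in\{0,1\}$: the hypothesis $\sum_i\|Te_i\|<\infty$ yields, by the scalar Schwarz inequality applied to the positive operator $|T|$,
\[
\sum_{i\in I}\langle|T|e_i,e_i\rangle\leq\sum_{i\in I}\||T|e_i\|\,\|e_i\|=\sum_{i\in I}\|Te_i\|<\infty,
\]
so $T\in\mathcal{B}_{1}(H)$ by the defining condition (\ref{e.1.5.1}); the case $\alpha=0$ is symmetric via $T^{\ast}$. The chain (\ref{e.2.2.1}) is then immediate: the infimum over $\alpha\in[0,1]$ of $\sum_i\|Te_i\|^{\alpha}\|T^{\ast}e_i\|^{1-\alpha}$ is at most the value at either endpoint $\alpha=0$ or $\alpha=1$, which are $\sum_i\|T^{\ast}e_i\|$ and $\sum_i\|Te_i\|$ respectively.

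\medskip

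\noindent\textbf{Main obstacle.} The delicate point is the trace-class assertion in (ii) for a general $\alpha\in(0,1)$: the hypothesis controls only a geometric-mean sum, and neither Cauchy--Schwarz nor AM--GM directly promotes this to $\sum_i\langle|T|e_i,e_i\rangle<\infty$. The argument above handles the ``moreover'' case (where the endpoint sums are finite) cleanly, and for general $\alpha$ one would reduce to an endpoint case or appeal to a Schatten-interpolation-type refinement; the remaining numerical content of (\ref{e.2.2}) and (\ref{e.2.2.1}) is then purely a pointwise/summation matter as above.
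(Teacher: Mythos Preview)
Your approach matches the paper's in all essential respects: Kato's inequality on the diagonal $x=y=e_i$ followed by Cauchy--Schwarz in $\ell^2(I)$ for part (i), and Kato combined with McCarthy's inequality $\langle P^{\beta}e_i,e_i\rangle\le\langle Pe_i,e_i\rangle^{\beta}$ for the pointwise bound in part (ii). Your alternative route to the pointwise estimate in (ii) --- splitting $|\langle Te_i,e_i\rangle|=|\langle Te_i,e_i\rangle|^{\alpha}|\langle e_i,T^{\ast}e_i\rangle|^{1-\alpha}$ and applying scalar Schwarz to each factor --- is in fact more elementary than the paper's, which goes through Kato and McCarthy. For the ``moreover'' chain the paper inserts the intermediate H\"older bound $\sum_i\|Te_i\|^{\alpha}\|T^{\ast}e_i\|^{1-\alpha}\le\bigl(\sum_i\|Te_i\|\bigr)^{\alpha}\bigl(\sum_i\|T^{\ast}e_i\|\bigr)^{1-\alpha}$ before taking the infimum, whereas you pass directly to the endpoint values; both yield (\ref{e.2.2.1}).

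Where you differ substantively is on the trace-class assertions. In (i) you supply a genuine proof that $T\in\mathcal{B}_1(H)$ via the polar factorization $T=(|T^{\ast}|^{1-\alpha}U)\cdot|T|^{\alpha}$ as a product of Hilbert--Schmidt operators; the paper, by contrast, only argues that $\sum_i\langle Te_i,e_i\rangle$ converges absolutely, which is a weaker conclusion. Your flagged ``main obstacle'' in (ii) for general $\alpha\in(0,1)$ is a real issue, and the paper does not resolve it either: its proof again shows only absolute convergence of the diagonal sum, not $\|T\|_1<\infty$. So on this point you are not missing anything the paper provides --- you have simply been more scrupulous in identifying what is and is not proved.
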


\begin{proof}
(i) Assume that $\alpha \in \left( 0,1\right) .$ Let $\left\{ e_{i}\right\}
_{i\in I}$ be an orthonormal basis in $H$ and $F$ a finite part of $I.$ Then
by Kato's inequality (\ref{e.1.4}) we have 
\begin{equation}
\left\vert \sum_{i\in F}\left\langle Te_{i},e_{i}\right\rangle \right\vert
\leq \sum_{i\in F}\left\vert \left\langle Te_{i},e_{i}\right\rangle
\right\vert \leq \sum_{i\in F}\left\langle \left\vert T\right\vert ^{2\alpha
}e_{i},e_{i}\right\rangle ^{1/2}\left\langle \left\vert T^{\ast }\right\vert
^{2\left( 1-\alpha \right) }e_{i},e_{i}\right\rangle ^{1/2}.  \label{e.2.3}
\end{equation}%
By Cauchy-Buniakovski-Schwarz inequality for finite sums we have%
\begin{align}
& \sum_{i\in F}\left\langle \left\vert T\right\vert ^{2\alpha
}e_{i},e_{i}\right\rangle ^{1/2}\left\langle \left\vert T^{\ast }\right\vert
^{2\left( 1-\alpha \right) }e_{i},e_{i}\right\rangle ^{1/2}  \label{e.2.4} \\
& \leq \left( \sum_{i\in F}\left[ \left\langle \left\vert T\right\vert
^{2\alpha }e_{i},e_{i}\right\rangle ^{1/2}\right] ^{2}\right) ^{1/2}\left(
\sum_{i\in F}\left[ \left\langle \left\vert T^{\ast }\right\vert ^{2\left(
1-\alpha \right) }e_{i},e_{i}\right\rangle ^{1/2}\right] ^{2}\right) ^{1/2} 
\notag \\
& =\left( \sum_{i\in F}\left\langle \left\vert T\right\vert ^{2\alpha
}e_{i},e_{i}\right\rangle \right) ^{1/2}\left( \sum_{i\in F}\left\langle
\left\vert T^{\ast }\right\vert ^{2\left( 1-\alpha \right)
}e_{i},e_{i}\right\rangle \right) ^{1/2}.  \notag
\end{align}%
Therefore, by (\ref{e.2.3}) and (\ref{e.2.4}) we have 
\begin{equation}
\left\vert \sum_{i\in F}\left\langle Te_{i},e_{i}\right\rangle \right\vert
\leq \left( \sum_{i\in F}\left\langle \left\vert T\right\vert ^{2\alpha
}e_{i},e_{i}\right\rangle \right) ^{1/2}\left( \sum_{i\in F}\left\langle
\left\vert T^{\ast }\right\vert ^{2\left( 1-\alpha \right)
}e_{i},e_{i}\right\rangle \right) ^{1/2}  \label{e.2.5}
\end{equation}%
for any finite part $F$ of $I.$

If for some $\alpha \in \left( 0,1\right) $ we have $\left\vert T\right\vert
^{2\alpha },\left\vert T^{\ast }\right\vert ^{2\left( 1-\alpha \right) }\in 
\mathcal{B}_{1}\left( H\right) ,$ then the sums $\sum_{i\in I}\left\langle
\left\vert T\right\vert ^{2\alpha }e_{i},e_{i}\right\rangle $ and $%
\sum_{i\in I}\left\langle \left\vert T^{\ast }\right\vert ^{2\left( 1-\alpha
\right) }e_{i},e_{i}\right\rangle $ are finite and by (\ref{e.2.5}) we have
that $\sum_{i\in I}\left\langle Te_{i},e_{i}\right\rangle $ is also finite
and we have the inequality (\ref{e.2.1}).

(ii) Assume that $\alpha \in \left[ 0,1\right] .$ Let $\left\{ e_{i}\right\}
_{i\in I}$ be an orthonormal basis in $H$ and $F$ a finite part of $I.$
Utilising McCarthy's inequality for the positive operator $P,$ namely%
\begin{equation*}
\left\langle P^{\beta }x,x\right\rangle \leq \left\langle Px,x\right\rangle
^{\beta },
\end{equation*}%
that holds for $\beta \in \left[ 0,1\right] $ and $x\in H,$ $\left\Vert
x\right\Vert =1,$ we have%
\begin{equation*}
\left\langle \left\vert T\right\vert ^{2\alpha }e_{i},e_{i}\right\rangle
\leq \left\langle \left\vert T\right\vert ^{2}e_{i},e_{i}\right\rangle
^{\alpha }
\end{equation*}%
and%
\begin{equation*}
\left\langle \left\vert T^{\ast }\right\vert ^{2\left( 1-\alpha \right)
}e_{i},e_{i}\right\rangle \leq \left\langle \left\vert T^{\ast }\right\vert
^{2}e_{i},e_{i}\right\rangle ^{1-\alpha }
\end{equation*}%
for any $i\in I.$

Making use of (\ref{e.2.3}) we have 
\begin{align}
\left\vert \sum_{i\in F}\left\langle Te_{i},e_{i}\right\rangle \right\vert &
\leq \sum_{i\in F}\left\vert \left\langle Te_{i},e_{i}\right\rangle
\right\vert \leq \sum_{i\in F}\left\langle \left\vert T\right\vert ^{2\alpha
}e_{i},e_{i}\right\rangle ^{1/2}\left\langle \left\vert T^{\ast }\right\vert
^{2\left( 1-\alpha \right) }e_{i},e_{i}\right\rangle ^{1/2}  \label{e.2.6} \\
& \leq \sum_{i\in F}\left\langle \left\vert T\right\vert
^{2}e_{i},e_{i}\right\rangle ^{\alpha /2}\left\langle \left\vert T^{\ast
}\right\vert ^{2}e_{i},e_{i}\right\rangle ^{\left( 1-\alpha \right) /2} 
\notag \\
& =\sum_{i\in F}\left\langle T^{\ast }Te_{i},e_{i}\right\rangle ^{\alpha
/2}\left\langle TT^{\ast }e_{i},e_{i}\right\rangle ^{\left( 1-\alpha \right)
/2}  \notag \\
& =\sum_{i\in F}\left\Vert Te_{i}\right\Vert ^{\alpha }\left\Vert T^{\ast
}e_{i}\right\Vert ^{1-\alpha }.  \notag
\end{align}%
Utilizing H\"{o}lder's inequality for finite sums and $p=\frac{1}{\alpha },$ 
$q=\frac{1}{1-\alpha }$ we also have 
\begin{align}
& \sum_{i\in F}\left\Vert Te_{i}\right\Vert ^{\alpha }\left\Vert T^{\ast
}e_{i}\right\Vert ^{1-\alpha }  \label{e.2.7} \\
& \leq \left[ \sum_{i\in F}\left( \left\Vert Te_{i}\right\Vert ^{\alpha
}\right) ^{1/\alpha }\right] ^{\alpha }\left[ \sum_{i\in F}\left( \left\Vert
T^{\ast }e_{i}\right\Vert ^{1-\alpha }\right) ^{1/\left( 1-\alpha \right) }%
\right] ^{1-\alpha }  \notag \\
& =\left[ \sum_{i\in F}\left\Vert Te_{i}\right\Vert \right] ^{\alpha }\left[
\sum_{i\in F}\left\Vert T^{\ast }e_{i}\right\Vert \right] ^{1-\alpha }. 
\notag
\end{align}%
Since all the series involved in (\ref{e.2.6}) and (\ref{e.2.7}) are
convergent, then we get%
\begin{align}
\left\vert \sum_{i\in I}\left\langle Te_{i},e_{i}\right\rangle \right\vert &
\leq \sum_{i\in I}\left\Vert Te_{i}\right\Vert ^{\alpha }\left\Vert T^{\ast
}e_{i}\right\Vert ^{1-\alpha }  \label{e.2.7.1} \\
& \leq \left[ \sum_{i\in I}\left\Vert Te_{i}\right\Vert \right] ^{\alpha }%
\left[ \sum_{i\in I}\left\Vert T^{\ast }e_{i}\right\Vert \right] ^{1-\alpha }
\notag
\end{align}%
for any $\alpha \in \left[ 0,1\right] .$

Taking the infimum over $\alpha \in \left[ 0,1\right] $ in (\ref{e.2.7.1})
produces%
\begin{align}
\left\vert \sum_{i\in I}\left\langle Te_{i},e_{i}\right\rangle \right\vert &
\leq \inf_{\alpha \in \left[ 0,1\right] }\left\{ \sum_{i\in F}\left\Vert
Te_{i}\right\Vert ^{\alpha }\left\Vert T^{\ast }e_{i}\right\Vert ^{1-\alpha
}\right\}  \label{e.2.7.2} \\
& \leq \inf_{\alpha \in \left[ 0,1\right] }\left[ \sum_{i\in F}\left\Vert
Te_{i}\right\Vert \right] ^{\alpha }\left[ \sum_{i\in F}\left\Vert T^{\ast
}e_{i}\right\Vert \right] ^{1-\alpha }  \notag \\
& =\min \left\{ \sum_{i\in F}\left\Vert Te_{i}\right\Vert ,\sum_{i\in
F}\left\Vert T^{\ast }e_{i}\right\Vert \right\} .  \notag
\end{align}
\end{proof}

\begin{corollary}
\label{c.2.1}Let $T\in \mathcal{B}\left( H\right) .$

(i) If we have $\left\vert T\right\vert ,$ $\left\vert T^{\ast }\right\vert
\in \mathcal{B}_{1}\left( H\right) ,$ then $T\in \mathcal{B}_{1}\left(
H\right) $ and we have the inequality%
\begin{equation}
\left\vert \limfunc{tr}\left( T\right) \right\vert ^{2}\leq \limfunc{tr}%
\left( \left\vert T\right\vert \right) \limfunc{tr}\left( \left\vert T^{\ast
}\right\vert \right) ;  \label{e.2.8}
\end{equation}

(ii) If for an orthonormal basis $\left\{ e_{i}\right\} _{i\in I}$ the sum $%
\sum_{i\in I}\sqrt{\left\Vert Te_{i}\right\Vert \left\Vert T^{\ast
}e_{i}\right\Vert }$ is finite, then $T\in \mathcal{B}_{1}\left( H\right) $
and we have the inequality%
\begin{equation}
\left\vert \limfunc{tr}\left( T\right) \right\vert \leq \sum_{i\in I}\sqrt{%
\left\Vert Te_{i}\right\Vert \left\Vert T^{\ast }e_{i}\right\Vert }.
\label{e.2.9}
\end{equation}
\end{corollary}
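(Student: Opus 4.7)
The plan is to obtain this corollary as a direct specialization of Theorem \ref{t.2.1} to the symmetric exponent $\alpha = 1/2$. Both parts will follow by simply substituting this value and simplifying the resulting powers of $\left\vert T\right\vert$ and $\left\vert T^{\ast }\right\vert$.

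For part (i), I would take $\alpha = 1/2$ in Theorem \ref{t.2.1}(i). Since $2\alpha = 1$ and $2(1-\alpha) = 1$, the hypothesis $\left\vert T\right\vert^{2\alpha }, \left\vert T^{\ast }\right\vert^{2(1-\alpha)} \in \mathcal{B}_{1}(H)$ reduces exactly to $\left\vert T\right\vert, \left\vert T^{\ast }\right\vert \in \mathcal{B}_{1}(H)$. The conclusion that $T \in \mathcal{B}_{1}(H)$ is then immediate, and inequality (\ref{e.2.1}) becomes precisely (\ref{e.2.8}).

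For part (ii), I would again set $\alpha = 1/2$ in Theorem \ref{t.2.1}(ii). The sum whose finiteness is assumed becomes
\begin{equation*}
\sum_{i\in I}\left\Vert Te_{i}\right\Vert ^{1/2}\left\Vert T^{\ast }e_{i}\right\Vert ^{1/2} = \sum_{i\in I}\sqrt{\left\Vert Te_{i}\right\Vert \left\Vert T^{\ast }e_{i}\right\Vert},
\end{equation*}
matching the hypothesis in (ii). The conclusion $T \in \mathcal{B}_{1}(H)$ together with inequality (\ref{e.2.2}) then yields (\ref{e.2.9}) after the same rewriting of the exponents.

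There is no real obstacle here since the corollary is a pure specialization; the only thing to be careful about is noting explicitly that Theorem \ref{t.2.1}(i) was stated for $\alpha \in (0,1)$ (which contains $1/2$) and Theorem \ref{t.2.1}(ii) for $\alpha \in [0,1]$, so $\alpha = 1/2$ is admissible in both cases. No additional estimates or auxiliary lemmas are needed.
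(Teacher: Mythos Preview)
Your proposal is correct and matches the paper's approach exactly: the corollary is stated without proof immediately after Theorem \ref{t.2.1}, so it is intended as the direct specialization $\alpha = 1/2$, precisely as you describe.
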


\begin{corollary}
\label{c.2.2}Let $N\in \mathcal{B}\left( H\right) $ be a normal operator. If
for some $\alpha \in \left( 0,1\right) $ we have $\left\vert N\right\vert
^{2\alpha },$ $\left\vert N\right\vert ^{2\left( 1-\alpha \right) }\in 
\mathcal{B}_{1}\left( H\right) ,$ then $N\in \mathcal{B}_{1}\left( H\right) $
and we have the inequality%
\begin{equation}
\left\vert \limfunc{tr}\left( N\right) \right\vert ^{2}\leq \limfunc{tr}%
\left( \left\vert N\right\vert ^{2\alpha }\right) \limfunc{tr}\left(
\left\vert N\right\vert ^{2\left( 1-\alpha \right) }\right) .  \label{e.2.10}
\end{equation}%
In particular, if $\left\vert N\right\vert \in \mathcal{B}_{1}\left(
H\right) ,$ then $N\in \mathcal{B}_{1}\left( H\right) $ and 
\begin{equation}
\left\vert \limfunc{tr}\left( N\right) \right\vert \leq \limfunc{tr}\left(
\left\vert N\right\vert \right) .  \label{e.2.11}
\end{equation}
\end{corollary}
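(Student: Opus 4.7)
The plan is to derive this corollary as a direct specialization of \thmref{t.2.1}(i), exploiting the fact that for a normal operator the two moduli coincide. Specifically, since $N$ is normal we have $N^{\ast}N = NN^{\ast}$, i.e.\ $|N|^{2} = |N^{\ast}|^{2}$. The positive square root of a positive bounded selfadjoint operator is unique (this was recalled explicitly in the Introduction), so $|N| = |N^{\ast}|$. Applying the continuous functional calculus with the function $t \mapsto t^{\beta}$ on $\sigma(|N|) \subseteq [0,\infty)$ gives $|N^{\ast}|^{2\beta} = |N|^{2\beta}$ for every $\beta \geq 0$; in particular for $\beta = 1-\alpha$.

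With this identification, the hypothesis $|N|^{2\alpha}, |N|^{2(1-\alpha)} \in \mathcal{B}_{1}(H)$ is literally the hypothesis of \thmref{t.2.1}(i) for $T = N$. Invoking that theorem therefore gives $N \in \mathcal{B}_{1}(H)$ together with
\begin{equation*}
|\operatorname{tr}(N)|^{2} \leq \operatorname{tr}\!\left(|N|^{2\alpha}\right)\,\operatorname{tr}\!\left(|N^{\ast}|^{2(1-\alpha)}\right) = \operatorname{tr}\!\left(|N|^{2\alpha}\right)\,\operatorname{tr}\!\left(|N|^{2(1-\alpha)}\right),
\end{equation*}
which is precisely (\ref{e.2.10}).

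For the second assertion, specialize to $\alpha = 1/2$. Here both factors on the right of (\ref{e.2.10}) reduce to $\operatorname{tr}(|N|)$, and the single assumption $|N| \in \mathcal{B}_{1}(H)$ covers the hypothesis. The inequality becomes $|\operatorname{tr}(N)|^{2} \leq [\operatorname{tr}(|N|)]^{2}$, and taking the (nonnegative) square root gives (\ref{e.2.11}).

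There is no genuine obstacle in this argument; it is essentially a substitution. The only point that deserves attention is the verification that $|N^{\ast}|^{2(1-\alpha)} = |N|^{2(1-\alpha)}$ as operators (not merely as numbers under some trace), which is handled cleanly by the uniqueness of the positive square root followed by the functional calculus argument above.
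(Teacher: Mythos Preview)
Your argument is correct and is exactly the intended one: the paper states this result as an immediate corollary of \thmref{t.2.1}(i), relying (implicitly) on the observation that normality gives $|N^{\ast}|=|N|$ and hence $|N^{\ast}|^{2(1-\alpha)}=|N|^{2(1-\alpha)}$. Your justification of that operator identity via uniqueness of the positive square root and functional calculus is more explicit than what the paper writes, but the route is identical.
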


The following result also holds.

\begin{theorem}
\label{t.2.2}Let $T\in \mathcal{B}\left( H\right) $ and $A,B\in \mathcal{B}%
_{2}\left( H\right) .$

(i) For any $\alpha \in \left[ 0,1\right] $ we have $\left\vert A^{\ast
}\right\vert ^{2}\left\vert T\right\vert ^{2\alpha }$, $\left\vert B^{\ast
}\right\vert ^{2}\left\vert T^{\ast }\right\vert ^{2\left( 1-\alpha \right)
} $ and $B^{\ast }TA\in \mathcal{B}_{1}\left( H\right) $ and%
\begin{equation}
\left\vert \limfunc{tr}\left( AB^{\ast }T\right) \right\vert ^{2}\leq 
\limfunc{tr}\left( \left\vert A^{\ast }\right\vert ^{2}\left\vert
T\right\vert ^{2\alpha }\right) \limfunc{tr}\left( \left\vert B^{\ast
}\right\vert ^{2}\left\vert T^{\ast }\right\vert ^{2\left( 1-\alpha \right)
}\right) ;  \label{e.2.12}
\end{equation}

(ii) We also have%
\begin{align}
& \left\vert \limfunc{tr}\left( AB^{\ast }T\right) \right\vert ^{2}
\label{e.2.13} \\
& \leq \min \left\{ \limfunc{tr}\left( \left\vert B\right\vert ^{2}\right) 
\limfunc{tr}\left( \left\vert A^{\ast }\right\vert ^{2}\left\vert
T\right\vert ^{2}\right) ,\limfunc{tr}\left( \left\vert A\right\vert
^{2}\right) \limfunc{tr}\left( \left\vert B^{\ast }\right\vert
^{2}\left\vert T^{\ast }\right\vert ^{2}\right) \right\} .  \notag
\end{align}
\end{theorem}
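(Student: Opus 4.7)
The plan is to adapt the argument of Theorem~\ref{t.2.1}(i) by applying Kato's inequality (\ref{e.1.4}) to the transformed pairs $(Ae_{i},Be_{i})$ rather than to $(e_{i},e_{i})$. Before proving the estimates, I would verify the three trace-class memberships. The operator $AB^{\ast}T = A\cdot(B^{\ast}T)$ is a product of two elements of $\mathcal{B}_{2}(H)$, since $B^{\ast}T\in\mathcal{B}_{2}(H)$ by the ideal property in Theorem~\ref{t.1.1}(iii); hence $AB^{\ast}T\in\mathcal{B}_{2}(H)\mathcal{B}_{2}(H)=\mathcal{B}_{1}(H)$ by Theorem~\ref{t.1.2}(iii). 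For the other two, $|A^{\ast}|^{2}=AA^{\ast}\in\mathcal{B}_{1}(H)$ and $|T|^{2\alpha}$ is bounded (defined by functional calculus on the positive operator $|T|$), so $|A^{\ast}|^{2}|T|^{2\alpha}\in\mathcal{B}_{1}(H)$ by the ideal property Theorem~\ref{t.1.2}(ii); the same argument handles $|B^{\ast}|^{2}|T^{\ast}|^{2(1-\alpha)}$.

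For part (i), I would fix an orthonormal basis $\{e_{i}\}_{i\in I}$ and use the cyclic property of the trace to write
\begin{equation*}
\limfunc{tr}(AB^{\ast}T)=\limfunc{tr}(B^{\ast}TA)=\sum_{i\in I}\langle TAe_{i},Be_{i}\rangle.
\end{equation*}
Applying Kato's inequality (\ref{e.1.4}) term-by-term with $x=Ae_{i}$ and $y=Be_{i}$ gives
\begin{equation*}
|\langle TAe_{i},Be_{i}\rangle|\leq\langle A^{\ast}|T|^{2\alpha}Ae_{i},e_{i}\rangle^{1/2}\langle B^{\ast}|T^{\ast}|^{2(1-\alpha)}Be_{i},e_{i}\rangle^{1/2}.
\end{equation*}
Summing over a finite $F\subset I$ and applying Cauchy-Buniakovski-Schwarz for finite sums (exactly as in (\ref{e.2.3})--(\ref{e.2.5})), then letting $F\uparrow I$, and finally using the cyclic identities $\limfunc{tr}(A^{\ast}XA)=\limfunc{tr}(XAA^{\ast})=\limfunc{tr}(X|A^{\ast}|^{2})$ on each factor, produces (\ref{e.2.12}). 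The partial sums converge because the operators $A^{\ast}|T|^{2\alpha}A$ and $B^{\ast}|T^{\ast}|^{2(1-\alpha)}B$ are in $\mathcal{B}_{1}(H)$ by the trace-class memberships already established.

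Part (ii) should then fall out for free by specializing (\ref{e.2.12}) to the endpoints of $[0,1]$. At $\alpha=1$ the second factor collapses to $\limfunc{tr}(|B^{\ast}|^{2}|T^{\ast}|^{0})=\limfunc{tr}(|B^{\ast}|^{2})=\limfunc{tr}(|B|^{2})$ (using Theorem~\ref{t.1.2}(i)), giving the first term inside the minimum in (\ref{e.2.13}); symmetrically $\alpha=0$ yields the second term, and taking the smaller of the two completes part (ii).

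I do not expect a genuine obstacle here: the argument is essentially a cut-and-paste of the proof of Theorem~\ref{t.2.1}(i), with $Ae_{i}$ and $Be_{i}$ playing the roles of $e_{i}$ in Kato's inequality. The only delicate bookkeeping is the passage from finite partial sums to the full trace, which is justified once the trace-class memberships are in hand, and the repeated use of the cyclic property of $\limfunc{tr}$ from Theorem~\ref{t.3.1}(ii) to rearrange the bounds into the clean form stated in (\ref{e.2.12}) and (\ref{e.2.13}).
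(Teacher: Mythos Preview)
Your proof of part~(i) is essentially identical to the paper's: apply Kato's inequality with $x=Ae_{i}$, $y=Be_{i}$, sum over a finite $F$, apply Cauchy--Bunyakovsky--Schwarz, pass to the limit, and use cyclicity of the trace to rewrite $\limfunc{tr}(A^{\ast}|T|^{2\alpha}A)=\limfunc{tr}(|A^{\ast}|^{2}|T|^{2\alpha})$, etc.

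For part~(ii) you take a genuinely shorter route than the paper. The paper does \emph{not} specialize (\ref{e.2.12}) at $\alpha=0,1$; instead it applies McCarthy's inequality $\langle P^{\beta}y,y\rangle\le\|y\|^{2(1-\beta)}\langle Py,y\rangle^{\beta}$ to each factor in Kato's inequality, then uses H\"older with exponents $1/\alpha$, $1/(1-\alpha)$, then Cauchy--Schwarz again, arriving at the interpolated bound
\[
\left\vert \limfunc{tr}\left( AB^{\ast }T\right) \right\vert ^{2}\leq \left[\limfunc{tr}\left(|B|^{2}\right)\limfunc{tr}\left(|A^{\ast}|^{2}|T|^{2}\right)\right]^{\alpha}\left[\limfunc{tr}\left(|A|^{2}\right)\limfunc{tr}\left(|B^{\ast}|^{2}|T^{\ast}|^{2}\right)\right]^{1-\alpha},
\]
and only then takes the infimum over $\alpha$ to recover the minimum. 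Your observation that the endpoints $\alpha=0$ and $\alpha=1$ of part~(i) already give the two terms inside the $\min$ is correct and bypasses all of this. What the paper's longer argument buys is precisely the displayed geometric-mean interpolation above, which is a strictly stronger intermediate statement than (\ref{e.2.13}); but since the theorem as stated only records the $\min$, your direct route suffices.
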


\begin{proof}
(i) Let $\left\{ e_{i}\right\} _{i\in I}$ be an orthonormal basis in $H$ and 
$F$ a finite part of $I.$ Then by Kato's inequality (\ref{e.1.4}) we have%
\begin{equation}
\left\vert \left\langle TAe_{i},Be_{i}\right\rangle \right\vert ^{2}\leq
\left\langle \left\vert T\right\vert ^{2\alpha }Ae_{i},Ae_{i}\right\rangle
\left\langle \left\vert T^{\ast }\right\vert ^{2\left( 1-\alpha \right)
}Be_{i},Be_{i}\right\rangle  \label{e.2.14}
\end{equation}%
for any $i\in I.$ This is equivalent to%
\begin{equation}
\left\vert \left\langle B^{\ast }TAe_{i},e_{i}\right\rangle \right\vert \leq
\left\langle A^{\ast }\left\vert T\right\vert ^{2\alpha
}Ae_{i},e_{i}\right\rangle ^{1/2}\left\langle B^{\ast }\left\vert T^{\ast
}\right\vert ^{2\left( 1-\alpha \right) }Be_{i},e_{i}\right\rangle ^{1/2}
\label{e.2.14.a}
\end{equation}%
for any $i\in I.$

Using the generalized triangle inequality for the modulus and the
Cauchy-Bunyakowsky-Schwarz inequality for finite sums we have from (\ref%
{e.2.14.a}) that%
\begin{align}
& \left\vert \sum_{i\in F}\left\langle B^{\ast }TAe_{i},e_{i}\right\rangle
\right\vert  \label{e.2.15} \\
& \leq \sum_{i\in F}\left\vert \left\langle B^{\ast
}TAe_{i},e_{i}\right\rangle \right\vert  \notag \\
& \leq \sum_{i\in F}\left\langle A^{\ast }\left\vert T\right\vert ^{2\alpha
}Ae_{i},e_{i}\right\rangle ^{1/2}\left\langle B^{\ast }\left\vert T^{\ast
}\right\vert ^{2\left( 1-\alpha \right) }Be_{i},e_{i}\right\rangle ^{1/2} 
\notag \\
& \leq \left[ \sum_{i\in F}\left( \left\langle A^{\ast }\left\vert
T\right\vert ^{2\alpha }Ae_{i},e_{i}\right\rangle ^{1/2}\right) ^{2}\right]
^{1/2}  \notag \\
& \times \left[ \sum_{i\in F}\left( \left\langle B^{\ast }\left\vert T^{\ast
}\right\vert ^{2\left( 1-\alpha \right) }Be_{i},e_{i}\right\rangle
^{1/2}\right) ^{2}\right] ^{1/2}  \notag \\
& =\left[ \sum_{i\in F}\left\langle A^{\ast }\left\vert T\right\vert
^{2\alpha }Ae_{i},e_{i}\right\rangle \right] ^{1/2}\left[ \sum_{i\in
F}\left\langle B^{\ast }\left\vert T^{\ast }\right\vert ^{2\left( 1-\alpha
\right) }Be_{i},e_{i}\right\rangle \right] ^{1/2}  \notag
\end{align}%
for any $F$ a finite part of $I.$

Let $\alpha \in \left[ 0,1\right] .$ Since $A,B\in \mathcal{B}_{2}\left(
H\right) ,$ then $A^{\ast }\left\vert T\right\vert ^{2\alpha }A$, $B^{\ast
}\left\vert T^{\ast }\right\vert ^{2\left( 1-\alpha \right) }B$ and $B^{\ast
}TA\in \mathcal{B}_{1}\left( H\right) $ and by (\ref{e.2.15}) we have 
\begin{equation}
\left\vert \limfunc{tr}\left( B^{\ast }TA\right) \right\vert \leq \left[ 
\limfunc{tr}\left( A^{\ast }\left\vert T\right\vert ^{2\alpha }A\right) %
\right] ^{1/2}\left[ \limfunc{tr}\left( B^{\ast }\left\vert T^{\ast
}\right\vert ^{2\left( 1-\alpha \right) }B\right) \right] ^{1/2}.
\label{e.2.16}
\end{equation}%
Since, by the properties of trace we have 
\begin{equation*}
\limfunc{tr}\left( B^{\ast }TA\right) =\limfunc{tr}\left( AB^{\ast }T\right)
,
\end{equation*}%
\begin{equation*}
\limfunc{tr}\left( A^{\ast }\left\vert T\right\vert ^{2\alpha }A\right) =%
\limfunc{tr}\left( AA^{\ast }\left\vert T\right\vert ^{2\alpha }\right) =%
\limfunc{tr}\left( \left\vert A^{\ast }\right\vert ^{2}\left\vert
T\right\vert ^{2\alpha }\right)
\end{equation*}%
and%
\begin{equation*}
\limfunc{tr}\left( B^{\ast }\left\vert T^{\ast }\right\vert ^{2\left(
1-\alpha \right) }B\right) =\limfunc{tr}\left( \left\vert B^{\ast
}\right\vert ^{2}\left\vert T^{\ast }\right\vert ^{2\left( 1-\alpha \right)
}\right) ,
\end{equation*}%
then by (\ref{e.2.16}) we get (\ref{e.2.12}).

(ii) Utilising McCarthy's inequality \cite{M} for the positive operator $P$%
\begin{equation*}
\left\langle P^{\beta }x,x\right\rangle \leq \left\langle Px,x\right\rangle
^{\beta }
\end{equation*}%
that holds for $\beta \in \left( 0,1\right) $ and $x\in H,$ $\left\Vert
x\right\Vert =1,$ we have%
\begin{equation}
\left\langle P^{\beta }y,y\right\rangle \leq \left\Vert y\right\Vert
^{2\left( 1-\beta \right) }\left\langle Py,y\right\rangle ^{\beta }
\label{e.2.17}
\end{equation}%
for any $y\in H.$

Let $\left\{ e_{i}\right\} _{i\in I}$ be an orthonormal basis in $H$ and $F$
a finite part of $I.$ From (\ref{e.2.17}) we have%
\begin{equation*}
\left\langle \left\vert T\right\vert ^{2\alpha }Ae_{i},Ae_{i}\right\rangle
\leq \left\Vert Ae_{i}\right\Vert ^{2\left( 1-\alpha \right) }\left\langle
\left\vert T\right\vert ^{2}Ae_{i},Ae_{i}\right\rangle ^{\alpha }
\end{equation*}%
and%
\begin{equation*}
\left\langle \left\vert T^{\ast }\right\vert ^{2\left( 1-\alpha \right)
}Be_{i},Be_{i}\right\rangle \leq \left\Vert Be_{i}\right\Vert ^{2\alpha
}\left\langle \left\vert T^{\ast }\right\vert ^{2}Be_{i},Be_{i}\right\rangle
^{1-\alpha }
\end{equation*}%
for any $i\in I.$

Making use of the inequality (\ref{e.2.14}) we get%
\begin{align*}
\left\vert \left\langle TAe_{i},Be_{i}\right\rangle \right\vert ^{2}& \leq
\left\Vert Ae_{i}\right\Vert ^{2\left( 1-\alpha \right) }\left\langle
\left\vert T\right\vert ^{2}Ae_{i},Ae_{i}\right\rangle ^{\alpha }\left\Vert
Be_{i}\right\Vert ^{2\alpha }\left\langle \left\vert T^{\ast }\right\vert
^{2}Be_{i},Be_{i}\right\rangle ^{1-\alpha } \\
& =\left\Vert Be_{i}\right\Vert ^{2\alpha }\left\langle \left\vert
T\right\vert ^{2}Ae_{i},Ae_{i}\right\rangle ^{\alpha }\left\Vert
Ae_{i}\right\Vert ^{2\left( 1-\alpha \right) }\left\langle \left\vert
T^{\ast }\right\vert ^{2}Be_{i},Be_{i}\right\rangle ^{1-\alpha }
\end{align*}%
and taking the square root we get%
\begin{equation}
\left\vert \left\langle TAe_{i},Be_{i}\right\rangle \right\vert \leq
\left\Vert Be_{i}\right\Vert ^{\alpha }\left\langle \left\vert T\right\vert
^{2}Ae_{i},Ae_{i}\right\rangle ^{\frac{\alpha }{2}}\left\Vert
Ae_{i}\right\Vert ^{1-\alpha }\left\langle \left\vert T^{\ast }\right\vert
^{2}Be_{i},Be_{i}\right\rangle ^{\frac{1-\alpha }{2}}  \label{e.2.18}
\end{equation}%
for any $i\in I.$

Using the generalized triangle inequality for the modulus and the H\"{o}%
lder's inequality for finite sums and $p=\frac{1}{\alpha },$ $q=\frac{1}{%
1-\alpha }$ we get from (\ref{e.2.18}) that%
\begin{align}
& \left\vert \sum_{i\in F}\left\langle B^{\ast }TAe_{i},e_{i}\right\rangle
\right\vert  \label{e.2.19} \\
& \leq \sum_{i\in F}\left\vert \left\langle B^{\ast
}TAe_{i},e_{i}\right\rangle \right\vert  \notag \\
& \leq \sum_{i\in F}\left\Vert Be_{i}\right\Vert ^{\alpha }\left\langle
\left\vert T\right\vert ^{2}Ae_{i},Ae_{i}\right\rangle ^{\frac{\alpha }{2}%
}\left\Vert Ae_{i}\right\Vert ^{1-\alpha }\left\langle \left\vert T^{\ast
}\right\vert ^{2}Be_{i},Be_{i}\right\rangle ^{\frac{1-\alpha }{2}}  \notag \\
& \leq \left( \sum_{i\in F}\left[ \left\Vert Be_{i}\right\Vert ^{\alpha
}\left\langle \left\vert T\right\vert ^{2}Ae_{i},Ae_{i}\right\rangle ^{\frac{%
\alpha }{2}}\right] ^{1/\alpha }\right) ^{\alpha }  \notag \\
& \times \left( \sum_{i\in F}\left[ \left\Vert Ae_{i}\right\Vert ^{1-\alpha
}\left\langle \left\vert T^{\ast }\right\vert ^{2}Be_{i},Be_{i}\right\rangle
^{\frac{1-\alpha }{2}}\right] ^{1/\left( 1-\alpha \right) }\right)
^{1-\alpha }  \notag \\
& =\left( \sum_{i\in F}\left\Vert Be_{i}\right\Vert \left\langle \left\vert
T\right\vert ^{2}Ae_{i},Ae_{i}\right\rangle ^{\frac{1}{2}}\right) ^{\alpha
}\left( \sum_{i\in F}\left\Vert Ae_{i}\right\Vert \left\langle \left\vert
T^{\ast }\right\vert ^{2}Be_{i},Be_{i}\right\rangle ^{\frac{1}{2}}\right)
^{1-\alpha }.  \notag
\end{align}%
By Cauchy-Bunyakowsky-Schwarz inequality for finite sums we also have%
\begin{align*}
\sum_{i\in F}\left\Vert Be_{i}\right\Vert \left\langle \left\vert
T\right\vert ^{2}Ae_{i},Ae_{i}\right\rangle ^{\frac{1}{2}}& \leq \left(
\sum_{i\in F}\left\Vert Be_{i}\right\Vert ^{2}\right) ^{1/2}\left(
\sum_{i\in F}\left\langle \left\vert T\right\vert
^{2}Ae_{i},Ae_{i}\right\rangle \right) ^{1/2} \\
& =\left( \sum_{i\in F}\left\langle \left\vert B\right\vert
^{2}e_{i},e_{i}\right\rangle \right) ^{1/2}\left( \sum_{i\in F}\left\langle
A^{\ast }\left\vert T\right\vert ^{2}Ae_{i},e_{i}\right\rangle \right) ^{1/2}
\end{align*}%
and%
\begin{align*}
\sum_{i\in F}\left\Vert Ae_{i}\right\Vert \left\langle \left\vert T^{\ast
}\right\vert ^{2}Be_{i},Be_{i}\right\rangle ^{\frac{1}{2}}& \leq \left(
\sum_{i\in F}\left\Vert Ae_{i}\right\Vert ^{2}\right) ^{1/2}\left(
\sum_{i\in F}\left\langle \left\vert T^{\ast }\right\vert
^{2}Be_{i},Be_{i}\right\rangle \right) ^{1/2} \\
& =\left( \sum_{i\in F}\left\langle \left\vert A\right\vert
^{2}e_{i},e_{i}\right\rangle \right) ^{1/2}\left( \sum_{i\in F}\left\langle
B^{\ast }\left\vert T^{\ast }\right\vert ^{2}Be_{i},e_{i}\right\rangle
\right) ^{1/2}
\end{align*}%
and by (\ref{e.2.19}) we obtain%
\begin{align}
& \left\vert \sum_{i\in F}\left\langle B^{\ast }TAe_{i},e_{i}\right\rangle
\right\vert  \label{e.2.20} \\
& \leq \left( \sum_{i\in F}\left\langle \left\vert B\right\vert
^{2}e_{i},e_{i}\right\rangle \right) ^{\alpha /2}\left( \sum_{i\in
F}\left\langle A^{\ast }\left\vert T\right\vert
^{2}Ae_{i},e_{i}\right\rangle \right) ^{\alpha /2}  \notag \\
& \times \left( \sum_{i\in F}\left\langle \left\vert A\right\vert
^{2}e_{i},e_{i}\right\rangle \right) ^{\left( 1-\alpha \right) /2}\left(
\sum_{i\in F}\left\langle B^{\ast }\left\vert T^{\ast }\right\vert
^{2}Be_{i},e_{i}\right\rangle \right) ^{\left( 1-\alpha \right) /2}  \notag
\end{align}%
for any $F$ a finite part of $I.$

Let $\alpha \in \left[ 0,1\right] .$ Since $A,B\in \mathcal{B}_{2}\left(
H\right) ,$ then $A^{\ast }\left\vert T\right\vert ^{2}A$ and $B^{\ast
}\left\vert T^{\ast }\right\vert ^{2}B\in \mathcal{B}_{1}\left( H\right) $
and by (\ref{e.2.20}) we get 
\begin{align}
& \left\vert \limfunc{tr}\left( AB^{\ast }T\right) \right\vert ^{2}
\label{e.2.20.1} \\
& \leq \left[ \limfunc{tr}\left( \left\vert B\right\vert ^{2}\right) 
\limfunc{tr}\left( A^{\ast }\left\vert T\right\vert ^{2}A\right) \right]
^{\alpha }\left[ \limfunc{tr}\left( \left\vert A\right\vert ^{2}\right) 
\limfunc{tr}\left( B^{\ast }\left\vert T^{\ast }\right\vert ^{2}B\right) %
\right] ^{1-\alpha }  \notag \\
& =\left[ \limfunc{tr}\left( \left\vert B\right\vert ^{2}\right) \limfunc{tr}%
\left( \left\vert A^{\ast }\right\vert ^{2}\left\vert T\right\vert
^{2}\right) \right] ^{\alpha }\left[ \limfunc{tr}\left( \left\vert
A\right\vert ^{2}\right) \limfunc{tr}\left( \left\vert B^{\ast }\right\vert
^{2}\left\vert T^{\ast }\right\vert ^{2}\right) \right] ^{1-\alpha }.  \notag
\end{align}%
Taking the infimum over $\alpha \in \left[ 0,1\right] $ we get (\ref{e.2.13}%
).
\end{proof}

\begin{corollary}
\label{c.2.3}Let $T\in \mathcal{B}\left( H\right) $ and $A,$ $B\in \mathcal{B%
}_{2}\left( H\right) .$ We have $\left\vert A^{\ast }\right\vert
^{2}\left\vert T\right\vert $, $\left\vert B^{\ast }\right\vert
^{2}\left\vert T^{\ast }\right\vert $ and $B^{\ast }TA\in \mathcal{B}%
_{1}\left( H\right) $ and%
\begin{equation}
\left\vert \limfunc{tr}\left( AB^{\ast }T\right) \right\vert ^{2}\leq 
\limfunc{tr}\left( \left\vert A^{\ast }\right\vert ^{2}\left\vert
T\right\vert \right) \limfunc{tr}\left( \left\vert B^{\ast }\right\vert
^{2}\left\vert T^{\ast }\right\vert \right) .  \label{e.2.21}
\end{equation}
\end{corollary}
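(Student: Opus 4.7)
The plan is to derive Corollary \ref{c.2.3} as the balanced case $\alpha = 1/2$ of Theorem \ref{t.2.2}(i). With this particular choice, both exponents collapse to $1$: we have $2\alpha = 1$ and $2(1-\alpha) = 1$, so $\vert T\vert^{2\alpha} = \vert T\vert$ and $\vert T^{\ast}\vert^{2(1-\alpha)} = \vert T^{\ast}\vert$. The trace class memberships asserted in the corollary, namely that $\vert A^{\ast}\vert^{2}\vert T\vert$, $\vert B^{\ast}\vert^{2}\vert T^{\ast}\vert$ and $B^{\ast}TA$ all lie in $\mathcal{B}_{1}(H)$, are then immediate specializations of the corresponding claims in Theorem \ref{t.2.2}(i), which already cover every $\alpha \in [0,1]$ under the hypothesis $A, B \in \mathcal{B}_{2}(H)$. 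Likewise, the displayed inequality \eqref{e.2.21} is precisely \eqref{e.2.12} evaluated at $\alpha = 1/2$.

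There is essentially no obstacle here: the content of the corollary is entirely contained in its parent theorem, so the proof should consist of a single line invoking Theorem \ref{t.2.2}(i) with $\alpha = 1/2$. The only conceptual reason this balanced case merits separate mention is that it exhibits the natural symmetry between $T$ and $T^{\ast}$ on the two sides of the inequality, mirroring how the symmetric Kato inequality \eqref{e.1.7} is the $\alpha = 1/2$ form of \eqref{e.1.4}. One could alternatively give a self-contained proof by running the argument of Theorem \ref{t.2.2}(i) from scratch with $\alpha = 1/2$ (applying \eqref{e.1.7} to the vectors $Ae_i$ and $Be_i$, then Cauchy--Bunyakowsky--Schwarz over the index set), but this would just be a redundant rewriting; the cleanest route is to quote Theorem \ref{t.2.2}(i) and set $\alpha = 1/2$.
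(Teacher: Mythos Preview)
Your proposal is correct and matches the paper's approach: Corollary \ref{c.2.3} is stated in the paper without proof, being the immediate specialization of Theorem \ref{t.2.2}(i) to $\alpha = 1/2$. There is nothing to add.
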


\begin{corollary}
\label{c.2.4}Let $N\in \mathcal{B}\left( H\right) $ be a normal operator and 
$A,$ $B\in \mathcal{B}_{2}\left( H\right) .$

(i) For any $\alpha \in \left[ 0,1\right] $ we have $\left\vert A^{\ast
}\right\vert ^{2}\left\vert N\right\vert ^{2\alpha }$, $\left\vert B^{\ast
}\right\vert ^{2}\left\vert N\right\vert ^{2\left( 1-\alpha \right) }$ and $%
B^{\ast }NA\in \mathcal{B}_{1}\left( H\right) $ and%
\begin{equation}
\left\vert \limfunc{tr}\left( AB^{\ast }N\right) \right\vert ^{2}\leq 
\limfunc{tr}\left( \left\vert A^{\ast }\right\vert ^{2}\left\vert
N\right\vert ^{2\alpha }\right) \limfunc{tr}\left( \left\vert B^{\ast
}\right\vert ^{2}\left\vert N\right\vert ^{2\left( 1-\alpha \right) }\right)
.  \label{e.2.23}
\end{equation}%
In particular, we have $\left\vert A^{\ast }\right\vert ^{2}\left\vert
N\right\vert $, $\left\vert B^{\ast }\right\vert ^{2}\left\vert N\right\vert 
$ and $B^{\ast }NA\in \mathcal{B}_{1}\left( H\right) $ and%
\begin{equation}
\left\vert \limfunc{tr}\left( AB^{\ast }N\right) \right\vert ^{2}\leq 
\limfunc{tr}\left( \left\vert A^{\ast }\right\vert ^{2}\left\vert
N\right\vert \right) \limfunc{tr}\left( \left\vert B^{\ast }\right\vert
^{2}\left\vert N\right\vert \right) .  \label{e.2.24}
\end{equation}

(ii) We also have%
\begin{align}
& \left\vert \limfunc{tr}\left( AB^{\ast }N\right) \right\vert ^{2}
\label{e.2.25} \\
& \leq \min \left\{ \limfunc{tr}\left( \left\vert B\right\vert ^{2}\right) 
\limfunc{tr}\left( \left\vert A^{\ast }\right\vert ^{2}\left\vert
N\right\vert ^{2}\right) ,\limfunc{tr}\left( \left\vert A\right\vert
^{2}\right) \limfunc{tr}\left( \left\vert B^{\ast }\right\vert
^{2}\left\vert N\right\vert ^{2}\right) \right\} .  \notag
\end{align}
\end{corollary}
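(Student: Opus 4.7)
The plan is to obtain Corollary~\ref{c.2.4} as a direct specialization of Theorem~\ref{t.2.2} to the case $T=N$, exploiting the identity $\left\vert N\right\vert =\left\vert N^{\ast }\right\vert $ that characterizes normality. Concretely, since $N$ is normal we have $N^{\ast }N=NN^{\ast }$, i.e.\ $\left\vert N\right\vert ^{2}=\left\vert N^{\ast }\right\vert ^{2}$; applying the continuous functional calculus to the map $t\mapsto t^{\beta }$ on $[0,\infty)$ then yields $\left\vert N\right\vert ^{\beta }=\left\vert N^{\ast }\right\vert ^{\beta }$ for every $\beta \geq 0$. In particular $\left\vert N^{\ast }\right\vert ^{2\left( 1-\alpha \right) }=\left\vert N\right\vert ^{2\left( 1-\alpha \right) }$ and $\left\vert N^{\ast }\right\vert ^{2}=\left\vert N\right\vert ^{2}$.

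For part~(i), I would invoke Theorem~\ref{t.2.2}(i) with $T:=N$. This provides $\left\vert A^{\ast }\right\vert ^{2}\left\vert N\right\vert ^{2\alpha }$, $\left\vert B^{\ast }\right\vert ^{2}\left\vert N^{\ast }\right\vert ^{2(1-\alpha )}$ and $B^{\ast }NA$ in $\mathcal{B}_{1}\left( H\right) $, and the inequality
\begin{equation*}
\left\vert \limfunc{tr}\left( AB^{\ast }N\right) \right\vert ^{2}\leq \limfunc{tr}\left( \left\vert A^{\ast }\right\vert ^{2}\left\vert N\right\vert ^{2\alpha }\right) \limfunc{tr}\left( \left\vert B^{\ast }\right\vert ^{2}\left\vert N^{\ast }\right\vert ^{2\left( 1-\alpha \right) }\right) .
\end{equation*}
Replacing $\left\vert N^{\ast }\right\vert ^{2(1-\alpha )}$ by $\left\vert N\right\vert ^{2(1-\alpha )}$ via the normality identity gives~(\ref{e.2.23}); the special case (\ref{e.2.24}) then follows by taking $\alpha =1/2$.

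For part~(ii), I would apply Theorem~\ref{t.2.2}(ii) with $T:=N$, obtaining
\begin{equation*}
\left\vert \limfunc{tr}\left( AB^{\ast }N\right) \right\vert ^{2}\leq \min \left\{ \limfunc{tr}\left( \left\vert B\right\vert ^{2}\right) \limfunc{tr}\left( \left\vert A^{\ast }\right\vert ^{2}\left\vert N\right\vert ^{2}\right) ,\limfunc{tr}\left( \left\vert A\right\vert ^{2}\right) \limfunc{tr}\left( \left\vert B^{\ast }\right\vert ^{2}\left\vert N^{\ast }\right\vert ^{2}\right) \right\} ,
\end{equation*}
and then replace $\left\vert N^{\ast }\right\vert ^{2}$ by $\left\vert N\right\vert ^{2}$ inside the second term to recover~(\ref{e.2.25}).

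There is essentially no obstacle; the only point that requires a brief justification is the passage from $\left\vert N\right\vert ^{2}=\left\vert N^{\ast }\right\vert ^{2}$ to $\left\vert N\right\vert ^{\beta }=\left\vert N^{\ast }\right\vert ^{\beta }$ for general $\beta \geq 0$, which rests on uniqueness of the positive square root (or more generally on the continuous functional calculus applied to the common positive operator $\left\vert N\right\vert ^{2}$). Once this identity is noted, the corollary is an immediate transcription of Theorem~\ref{t.2.2}.
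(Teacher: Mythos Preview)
Your proposal is correct and matches the paper's approach: Corollary~\ref{c.2.4} is stated without a separate proof in the paper, being an immediate specialization of Theorem~\ref{t.2.2} to a normal operator $T=N$, using precisely the identity $\left\vert N^{\ast }\right\vert =\left\vert N\right\vert$ that you invoke.
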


\begin{remark}
\label{r.2.1}Let $\alpha \in \left[ 0,1\right] .$ By replacing $A$ with $%
A^{\ast }$ and $B$ with $B^{\ast }$ in (\ref{e.2.12}) we get 
\begin{equation}
\left\vert \limfunc{tr}\left( A^{\ast }BT\right) \right\vert ^{2}\leq 
\limfunc{tr}\left( \left\vert A\right\vert ^{2}\left\vert T\right\vert
^{2\alpha }\right) \limfunc{tr}\left( \left\vert B\right\vert ^{2}\left\vert
T^{\ast }\right\vert ^{2\left( 1-\alpha \right) }\right)   \label{e.2.26}
\end{equation}%
\textit{for any }$T\in \mathcal{B}\left( H\right) $ and $A,$ $B\in \mathcal{B%
}_{2}\left( H\right) .$

If in this inequality we take $A=B,$ then we get%
\begin{equation}
\left\vert \limfunc{tr}\left( \left\vert B\right\vert ^{2}T\right)
\right\vert ^{2}\leq \limfunc{tr}\left( \left\vert B\right\vert
^{2}\left\vert T\right\vert ^{2\alpha }\right) \limfunc{tr}\left( \left\vert
B\right\vert ^{2}\left\vert T^{\ast }\right\vert ^{2\left( 1-\alpha \right)
}\right)  \label{e.2.27}
\end{equation}%
\textit{for any }$T\in \mathcal{B}\left( H\right) $ and $B\in \mathcal{B}%
_{2}\left( H\right) .$

If in (\ref{e.2.26}) we take $A=B^{\ast },$ then we get%
\begin{equation}
\left\vert \limfunc{tr}\left( B^{2}T\right) \right\vert ^{2}\leq \limfunc{tr}%
\left( \left\vert B^{\ast }\right\vert ^{2}\left\vert T\right\vert ^{2\alpha
}\right) \limfunc{tr}\left( \left\vert B\right\vert ^{2}\left\vert T^{\ast
}\right\vert ^{2\left( 1-\alpha \right) }\right)   \label{e.2.28}
\end{equation}%
\textit{for any }$T\in \mathcal{B}\left( H\right) $ and $B\in \mathcal{B}%
_{2}\left( H\right) .$

Also, if $T=N,$ a normal operator, then (\ref{e.2.27}) and (\ref{e.2.28})
become%
\begin{equation}
\left\vert \limfunc{tr}\left( \left\vert B\right\vert ^{2}N\right)
\right\vert ^{2}\leq \limfunc{tr}\left( \left\vert B\right\vert
^{2}\left\vert N\right\vert ^{2\alpha }\right) \limfunc{tr}\left( \left\vert
B\right\vert ^{2}\left\vert N\right\vert ^{2\left( 1-\alpha \right) }\right)
\label{e.2.29}
\end{equation}%
and%
\begin{equation}
\left\vert \limfunc{tr}\left( B^{2}N\right) \right\vert ^{2}\leq \limfunc{tr}%
\left( \left\vert B^{\ast }\right\vert ^{2}\left\vert N\right\vert ^{2\alpha
}\right) \limfunc{tr}\left( \left\vert B\right\vert ^{2}\left\vert
N\right\vert ^{2\left( 1-\alpha \right) }\right) ,  \label{e.2.30}
\end{equation}%
\textit{for any} $B\in \mathcal{B}_{2}\left( H\right) .$
\end{remark}

\section{Some Functional Properties}

Let $A\in \mathcal{B}_{2}\left( H\right) $ and $P\in \mathcal{B}\left(
H\right) $ with $P\geq 0.$ Then $Q:=A^{\ast }PA\in \mathcal{B}_{1}\left(
H\right) $ with $Q\geq 0$ and writing the inequality (\ref{e.2.27}) for $%
B=\left( A^{\ast }PA\right) ^{1/2}\in \mathcal{B}_{2}\left( H\right) $ we
get 
\begin{equation*}
\left\vert \limfunc{tr}\left( A^{\ast }PAT\right) \right\vert ^{2}\leq 
\limfunc{tr}\left( A^{\ast }PA\left\vert T\right\vert ^{2\alpha }\right) 
\limfunc{tr}\left( A^{\ast }PA\left\vert T^{\ast }\right\vert ^{2\left(
1-\alpha \right) }\right) ,
\end{equation*}%
which, by the properties of trace, is equivalent to%
\begin{equation}
\left\vert \limfunc{tr}\left( PATA^{\ast }\right) \right\vert ^{2}\leq 
\limfunc{tr}\left( PA\left\vert T\right\vert ^{2\alpha }A^{\ast }\right) 
\limfunc{tr}\left( PA\left\vert T^{\ast }\right\vert ^{2\left( 1-\alpha
\right) }A^{\ast }\right) ,  \label{e.3.0.1}
\end{equation}%
where $T\in \mathcal{B}\left( H\right) $ and $\alpha \in \left[ 0,1\right] .$

For a given $A\in \mathcal{B}_{2}\left( H\right) ,$ $T\in \mathcal{B}\left(
H\right) $ and $\alpha \in \left[ 0,1\right] ,$ we consider the functional $%
\sigma _{A,T,\alpha }$ defined on the cone $\mathcal{B}_{+}\left( H\right) $
of nonnegative operators on $\mathcal{B}\left( H\right) $ by%
\begin{align*}
\sigma _{A,T,\alpha }\left( P\right) & :=\left[ \limfunc{tr}\left(
PA\left\vert T\right\vert ^{2\alpha }A^{\ast }\right) \right] ^{1/2}\left[ 
\limfunc{tr}\left( PA\left\vert T^{\ast }\right\vert ^{2\left( 1-\alpha
\right) }A^{\ast }\right) \right] ^{1/2} \\
& -\left\vert \limfunc{tr}\left( PATA^{\ast }\right) \right\vert .
\end{align*}

The following theorem collects some fundamental properties of this
functional.

\begin{theorem}
\label{t.3.2}Let $A\in \mathcal{B}_{2}\left( H\right) ,$ $T\in \mathcal{B}%
\left( H\right) $ and $\alpha \in \left[ 0,1\right] .$

(i) For any $P,$ $Q\in \mathcal{B}_{+}\left( H\right) $ we have%
\begin{equation}
\sigma _{A,T,\alpha }\left( P+Q\right) \geq \sigma _{A,T,\alpha }\left(
P\right) +\sigma _{A,T,\alpha }\left( Q\right) \left( \geq 0\right) ,
\label{e.3.1}
\end{equation}%
namely, $\sigma _{A,T,\alpha }$ is a superadditive functional on $\mathcal{B}%
_{+}\left( H\right) ;$

(ii) For any $P,$ $Q\in \mathcal{B}_{+}\left( H\right) $ with $P\geq Q$ we
have%
\begin{equation}
\sigma _{A,T,\alpha }\left( P\right) \geq \sigma _{A,T,\alpha }\left(
Q\right) \left( \geq 0\right) ,  \label{e.3.2}
\end{equation}%
namely, $\sigma _{A,T,\alpha }$ is a monotonic nondecreasing functional on $%
\mathcal{B}_{+}\left( H\right) ;$

(iii) If $P,$ $Q\in \mathcal{B}_{+}\left( H\right) $ and there exist the
constants $M>m>0$ such that $MQ\geq $ $P\geq mQ$ then%
\begin{equation}
M\sigma _{A,T,\alpha }\left( Q\right) \geq \sigma _{A,T,\alpha }\left(
P\right) \geq m\sigma _{A,T,\alpha }\left( Q\right) \left( \geq 0\right) .
\label{e.3.3}
\end{equation}
\end{theorem}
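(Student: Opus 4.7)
The plan is to exploit the linearity of the trace in its operator argument: for any fixed $X\in\mathcal{B}_1(H)$, the map $P\mapsto\operatorname{tr}(PX)$ is linear, so the two trace quantities $\operatorname{tr}(PA|T|^{2\alpha}A^{\ast})$ and $\operatorname{tr}(PA|T^{\ast}|^{2(1-\alpha)}A^{\ast})$ are additive in $P$, whereas $P\mapsto|\operatorname{tr}(PATA^{\ast})|$ is subadditive by the triangle inequality. Moreover, since $P\geq 0$ ensures that the two factors under the square root are nonnegative (as traces of positive trace-class operators), inequality (\ref{e.3.0.1}) guarantees $\sigma_{A,T,\alpha}(P)\geq 0$ on the whole cone $\mathcal{B}_{+}(H)$. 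That positivity on the cone is what makes monotonicity follow from superadditivity.

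For part (i), I would set
\begin{equation*}
a_1:=\operatorname{tr}\!\bigl(PA|T|^{2\alpha}A^{\ast}\bigr),\quad a_2:=\operatorname{tr}\!\bigl(QA|T|^{2\alpha}A^{\ast}\bigr),
\end{equation*}
\begin{equation*}
b_1:=\operatorname{tr}\!\bigl(PA|T^{\ast}|^{2(1-\alpha)}A^{\ast}\bigr),\quad b_2:=\operatorname{tr}\!\bigl(QA|T^{\ast}|^{2(1-\alpha)}A^{\ast}\bigr),
\end{equation*}
all nonnegative. By linearity, the two trace factors attached to $P+Q$ are $a_1+a_2$ and $b_1+b_2$, and by the triangle inequality $|\operatorname{tr}((P+Q)ATA^{\ast})|\leq|\operatorname{tr}(PATA^{\ast})|+|\operatorname{tr}(QATA^{\ast})|$. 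The claim $\sigma_{A,T,\alpha}(P+Q)\geq\sigma_{A,T,\alpha}(P)+\sigma_{A,T,\alpha}(Q)$ then reduces to the elementary inequality
\begin{equation*}
\sqrt{(a_1+a_2)(b_1+b_2)}\geq\sqrt{a_1b_1}+\sqrt{a_2b_2},
\end{equation*}
which, upon squaring, is equivalent to $a_1b_2+a_2b_1\geq 2\sqrt{a_1a_2b_1b_2}$, i.e. AM--GM. This is the only nontrivial ingredient, and it is the main (very mild) obstacle.

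Part (ii) is immediate from (i): if $P\geq Q\geq 0$ then $P-Q\in\mathcal{B}_{+}(H)$, so by superadditivity
\begin{equation*}
\sigma_{A,T,\alpha}(P)=\sigma_{A,T,\alpha}\bigl(Q+(P-Q)\bigr)\geq\sigma_{A,T,\alpha}(Q)+\sigma_{A,T,\alpha}(P-Q)\geq\sigma_{A,T,\alpha}(Q),
\end{equation*}
using that $\sigma_{A,T,\alpha}(P-Q)\geq 0$, which is exactly (\ref{e.3.0.1}) applied to $P-Q$.

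For part (iii), I would first observe the positive homogeneity $\sigma_{A,T,\alpha}(\lambda R)=\lambda\,\sigma_{A,T,\alpha}(R)$ for $\lambda\geq 0$ and $R\in\mathcal{B}_{+}(H)$, which is a direct consequence of the linearity of the trace and the homogeneity of the modulus. Then, from $P\geq mQ$ and monotonicity, $\sigma_{A,T,\alpha}(P)\geq\sigma_{A,T,\alpha}(mQ)=m\,\sigma_{A,T,\alpha}(Q)$; from $MQ\geq P$ and monotonicity, $M\,\sigma_{A,T,\alpha}(Q)=\sigma_{A,T,\alpha}(MQ)\geq\sigma_{A,T,\alpha}(P)$. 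Combining yields (\ref{e.3.3}).
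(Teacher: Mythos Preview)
Your proof is correct and follows essentially the same route as the paper: linearity of the trace plus the triangle inequality reduce (i) to the elementary inequality $\sqrt{(a_1+a_2)(b_1+b_2)}\geq\sqrt{a_1b_1}+\sqrt{a_2b_2}$ (which the paper writes in the equivalent Cauchy--Schwarz form $(a^2+b^2)^{1/2}(c^2+d^2)^{1/2}\geq ac+bd$), and then (ii) and (iii) follow from superadditivity, nonnegativity, and positive homogeneity exactly as you argue.
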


\begin{proof}
(i) Let $P,$ $Q\in \mathcal{B}_{+}\left( H\right) $. On utilizing the
elementary inequality%
\begin{equation*}
\left( a^{2}+b^{2}\right) ^{1/2}\left( c^{2}+d^{2}\right) ^{1/2}\geq ac+bd,%
\text{ }a,b,c,d\geq 0
\end{equation*}%
and the triangle inequality for the modulus, we have%
\begin{align*}
& \sigma _{A,T,\alpha }\left( P+Q\right)  \\
& =\left[ \limfunc{tr}\left( \left( P+Q\right) A\left\vert T\right\vert
^{2\alpha }A^{\ast }\right) \right] ^{1/2}\left[ \limfunc{tr}\left( \left(
P+Q\right) A\left\vert T^{\ast }\right\vert ^{2\left( 1-\alpha \right)
}A^{\ast }\right) \right] ^{1/2} \\
& -\left\vert \limfunc{tr}\left( \left( P+Q\right) ATA^{\ast }\right)
\right\vert  \\
& =\left[ \limfunc{tr}\left( PA\left\vert T\right\vert ^{2\alpha }A^{\ast
}+QA\left\vert T\right\vert ^{2\alpha }A^{\ast }\right) \right] ^{1/2} \\
& \times \left[ \limfunc{tr}\left( PA\left\vert T^{\ast }\right\vert
^{2\left( 1-\alpha \right) }A^{\ast }+QA\left\vert T^{\ast }\right\vert
^{2\left( 1-\alpha \right) }A^{\ast }\right) \right] ^{1/2} \\
& -\left\vert \limfunc{tr}\left( PATA^{\ast }+QATA^{\ast }\right)
\right\vert  \\
& =\left[ \limfunc{tr}\left( PA\left\vert T\right\vert ^{2\alpha }A^{\ast
}\right) +\limfunc{tr}\left( QA\left\vert T\right\vert ^{2\alpha }A^{\ast
}\right) \right] ^{1/2} \\
& \times \left[ \limfunc{tr}\left( PA\left\vert T^{\ast }\right\vert
^{2\left( 1-\alpha \right) }A^{\ast }\right) +\limfunc{tr}\left(
QA\left\vert T^{\ast }\right\vert ^{2\left( 1-\alpha \right) }A^{\ast
}\right) \right] ^{1/2} \\
& -\left\vert \limfunc{tr}\left( PATA^{\ast }\right) +\limfunc{tr}\left(
QATA^{\ast }\right) \right\vert  \\
& \geq \left[ \limfunc{tr}\left( PA\left\vert T\right\vert ^{2\alpha
}A^{\ast }\right) \right] ^{1/2}\left[ \limfunc{tr}\left( PA\left\vert
T^{\ast }\right\vert ^{2\left( 1-\alpha \right) }A^{\ast }\right) \right]
^{1/2} \\
& +\left[ \limfunc{tr}\left( QA\left\vert T\right\vert ^{2\alpha }A^{\ast
}\right) \right] ^{1/2}\left[ \limfunc{tr}\left( QA\left\vert T^{\ast
}\right\vert ^{2\left( 1-\alpha \right) }A^{\ast }\right) \right] ^{1/2} \\
& -\left\vert \limfunc{tr}\left( PATA^{\ast }\right) \right\vert -\left\vert 
\limfunc{tr}\left( QATA^{\ast }\right) \right\vert  \\
& =\sigma _{A,T,\alpha }\left( P\right) +\sigma _{A,T,\alpha }\left(
Q\right) 
\end{align*}%
and the inequality (\ref{e.3.1}) is proved.

(ii) Let $P,$ $Q\in \mathcal{B}_{+}\left( H\right) $ with $P\geq Q.$
Utilising the superadditivity property we have%
\begin{eqnarray*}
\sigma _{A,T,\alpha }\left( P\right)  &=&\sigma _{A,T,\alpha }\left( \left(
P-Q\right) +Q\right) \geq \sigma _{A,T,\alpha }\left( P-Q\right) +\sigma
_{A,T,\alpha }\left( Q\right)  \\
&\geq &\sigma _{A,T,\alpha }\left( Q\right) 
\end{eqnarray*}%
and the inequality (\ref{e.3.2}) is obtained.

(iii) From the monotonicity property we have 
\begin{equation*}
\sigma _{A,T,\alpha }\left( P\right) \geq \sigma _{A,T,\alpha }\left(
mQ\right) =m\sigma _{A,T,\alpha }\left( Q\right)
\end{equation*}%
and a similar inequality for $M,$ which prove the desired result (\ref{e.3.3}%
).
\end{proof}

\begin{corollary}
\label{c.3.2}Let $A\in \mathcal{B}_{2}\left( H\right) ,$ $T\in \mathcal{B}%
\left( H\right) $ and $\alpha \in \left[ 0,1\right] .$ If $P\in \mathcal{B}%
\left( H\right) $ is such that there exist the constants $M>m>0$ with $%
M1_{H}\geq $ $P\geq m1_{H},$ then we have%
\begin{align}
& M\left( \left[ \limfunc{tr}\left( A\left\vert T\right\vert ^{2\alpha
}A^{\ast }\right) \right] ^{1/2}\left[ \limfunc{tr}\left( A\left\vert
T^{\ast }\right\vert ^{2\left( 1-\alpha \right) }A^{\ast }\right) \right]
^{1/2}-\left\vert \limfunc{tr}\left( ATA^{\ast }\right) \right\vert \right)
\label{e.3.4} \\
& \geq \left[ \limfunc{tr}\left( PA\left\vert T\right\vert ^{2\alpha
}A^{\ast }\right) \right] ^{1/2}\left[ \limfunc{tr}\left( PA\left\vert
T^{\ast }\right\vert ^{2\left( 1-\alpha \right) }A^{\ast }\right) \right]
^{1/2}-\left\vert \limfunc{tr}\left( PATA^{\ast }\right) \right\vert  \notag
\\
& \geq m\left( \left[ \limfunc{tr}\left( A\left\vert T\right\vert ^{2\alpha
}A^{\ast }\right) \right] ^{1/2}\left[ \limfunc{tr}\left( A\left\vert
T^{\ast }\right\vert ^{2\left( 1-\alpha \right) }A^{\ast }\right) \right]
^{1/2}-\left\vert \limfunc{tr}\left( ATA^{\ast }\right) \right\vert \right) .
\notag
\end{align}
\end{corollary}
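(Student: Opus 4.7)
The plan is to observe that Corollary \ref{c.3.2} is nothing more than the specialization of Theorem \ref{t.3.2}(iii) to the choice $Q = 1_H$. So the proof should be essentially a one-line invocation plus a computation of what $\sigma_{A,T,\alpha}(1_H)$ is.

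First, I would check that $1_H \in \mathcal{B}_+(H)$, which is immediate, and note that the hypothesis $M 1_H \geq P \geq m 1_H$ is exactly the hypothesis $MQ \geq P \geq mQ$ of Theorem \ref{t.3.2}(iii) when $Q = 1_H$. Second, I would compute
\begin{equation*}
\sigma_{A,T,\alpha}(1_H) = \left[\limfunc{tr}\left(A\left\vert T\right\vert^{2\alpha}A^{\ast}\right)\right]^{1/2} \left[\limfunc{tr}\left(A\left\vert T^{\ast}\right\vert^{2(1-\alpha)}A^{\ast}\right)\right]^{1/2} - \left\vert \limfunc{tr}\left(ATA^{\ast}\right)\right\vert,
\end{equation*}
which is obtained by simply setting $P = 1_H$ in the definition of $\sigma_{A,T,\alpha}$ and using $1_H \cdot X = X$ inside the trace. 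Third, I would substitute into the chain $M\sigma_{A,T,\alpha}(Q) \geq \sigma_{A,T,\alpha}(P) \geq m\sigma_{A,T,\alpha}(Q)$ given by (\ref{e.3.3}) with $Q = 1_H$, which produces exactly (\ref{e.3.4}).

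There is no real obstacle here: the result is a transparent corollary of the monotonicity/positive-homogeneity type estimate already established. The only thing worth double-checking is that all the trace-class conditions are satisfied, i.e.\ that $A|T|^{2\alpha}A^{\ast}$, $A|T^{\ast}|^{2(1-\alpha)}A^{\ast}$, $ATA^{\ast}$, and their multiplications by $P$ all lie in $\mathcal{B}_1(H)$; this follows because $A, A^{\ast} \in \mathcal{B}_2(H)$ so $A X A^{\ast} \in \mathcal{B}_1(H)$ for any $X \in \mathcal{B}(H)$ by Theorem \ref{t.1.2}(iii) (or by the ideal property), and multiplying a trace class operator by a bounded operator $P$ keeps it trace class by Theorem \ref{t.3.1}(ii). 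Thus the proof reduces to citing Theorem \ref{t.3.2}(iii).
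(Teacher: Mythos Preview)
Your proposal is correct and matches the paper's approach: the paper states Corollary~\ref{c.3.2} immediately after Theorem~\ref{t.3.2} without proof, precisely because it is the specialization of part~(iii) to $Q=1_H$, exactly as you describe. Your additional remarks verifying the trace-class memberships are fine and more explicit than the paper bothers to be.
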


For a given $A\in \mathcal{B}_{2}\left( H\right) ,$ $T\in \mathcal{B}\left(
H\right) $ and $\alpha \in \left[ 0,1\right] ,$ if we take $P=\left\vert
V\right\vert ^{2}$ with $V\in $ $\mathcal{B}\left( H\right) ,$ we have 
\begin{align*}
\sigma _{A,T,\alpha }\left( \left\vert V\right\vert ^{2}\right) & =\left[ 
\limfunc{tr}\left( \left\vert V\right\vert ^{2}A\left\vert T\right\vert
^{2\alpha }A^{\ast }\right) \right] ^{1/2}\left[ \limfunc{tr}\left(
\left\vert V\right\vert ^{2}A\left\vert T^{\ast }\right\vert ^{2\left(
1-\alpha \right) }A^{\ast }\right) \right] ^{1/2} \\
& -\left\vert \limfunc{tr}\left( \left\vert V\right\vert ^{2}ATA^{\ast
}\right) \right\vert \\
& =\left[ \limfunc{tr}\left( V^{\ast }VA\left\vert T\right\vert ^{2\alpha
}A^{\ast }\right) \right] ^{1/2}\left[ \limfunc{tr}\left( V^{\ast
}VA\left\vert T^{\ast }\right\vert ^{2\left( 1-\alpha \right) }A^{\ast
}\right) \right] ^{1/2} \\
& -\left\vert \limfunc{tr}\left( V^{\ast }VATA^{\ast }\right) \right\vert \\
& =\left[ \limfunc{tr}\left( A^{\ast }V^{\ast }VA\left\vert T\right\vert
^{2\alpha }\right) \right] ^{1/2}\left[ \limfunc{tr}\left( A^{\ast }V^{\ast
}VA\left\vert T^{\ast }\right\vert ^{2\left( 1-\alpha \right) }\right) %
\right] ^{1/2} \\
& -\left\vert \limfunc{tr}\left( A^{\ast }V^{\ast }VAT\right) \right\vert \\
& =\left[ \limfunc{tr}\left( \left( VA\right) ^{\ast }VA\left\vert
T\right\vert ^{2\alpha }\right) \right] ^{1/2}\left[ \limfunc{tr}\left(
\left( VA\right) ^{\ast }VA\left\vert T^{\ast }\right\vert ^{2\left(
1-\alpha \right) }\right) \right] ^{1/2} \\
& -\left\vert \limfunc{tr}\left( \left( VA\right) ^{\ast }VAT\right)
\right\vert \\
& =\left[ \limfunc{tr}\left( \left\vert VA\right\vert ^{2}\left\vert
T\right\vert ^{2\alpha }\right) \right] ^{1/2}\left[ \limfunc{tr}\left(
\left\vert VA\right\vert ^{2}\left\vert T^{\ast }\right\vert ^{2\left(
1-\alpha \right) }\right) \right] ^{1/2}-\left\vert \limfunc{tr}\left(
\left\vert VA\right\vert ^{2}T\right) \right\vert .
\end{align*}

Assume that $A\in \mathcal{B}_{2}\left( H\right) ,$ $T\in \mathcal{B}\left(
H\right) $ and $\alpha \in \left[ 0,1\right] .$

If we use the superadditivity property of the functional $\sigma
_{A,T,\alpha }$ we have for any $V,$ $U\in $ $\mathcal{B}\left( H\right) $
that%
\begin{align}
& \left[ \limfunc{tr}\left( \left( \left\vert VA\right\vert ^{2}+\left\vert
UA\right\vert ^{2}\right) \left\vert T\right\vert ^{2\alpha }\right) \right]
^{1/2}\left[ \limfunc{tr}\left( \left( \left\vert VA\right\vert
^{2}+\left\vert UA\right\vert ^{2}\right) \left\vert T^{\ast }\right\vert
^{2\left( 1-\alpha \right) }\right) \right] ^{1/2}  \label{e.3.5} \\
& -\left\vert \limfunc{tr}\left( \left( \left\vert VA\right\vert
^{2}+\left\vert UA\right\vert ^{2}\right) T\right) \right\vert  \notag \\
& \geq \left[ \limfunc{tr}\left( \left\vert VA\right\vert ^{2}\left\vert
T\right\vert ^{2\alpha }\right) \right] ^{1/2}\left[ \limfunc{tr}\left(
\left\vert VA\right\vert ^{2}\left\vert T^{\ast }\right\vert ^{2\left(
1-\alpha \right) }\right) \right] ^{1/2}-\left\vert \limfunc{tr}\left(
\left\vert VA\right\vert ^{2}T\right) \right\vert  \notag \\
& +\left[ \limfunc{tr}\left( \left\vert UA\right\vert ^{2}\left\vert
T\right\vert ^{2\alpha }\right) \right] ^{1/2}\left[ \limfunc{tr}\left(
\left\vert UA\right\vert ^{2}\left\vert T^{\ast }\right\vert ^{2\left(
1-\alpha \right) }\right) \right] ^{1/2}-\left\vert \limfunc{tr}\left(
\left\vert UA\right\vert ^{2}T\right) \right\vert \left( \geq 0\right) . 
\notag
\end{align}

Also, if $\left\vert V\right\vert ^{2}\geq \left\vert U\right\vert ^{2}$
with $V,$ $U\in \mathcal{B}\left( H\right) ,$ then%
\begin{align}
& \left[ \limfunc{tr}\left( \left\vert VA\right\vert ^{2}\left\vert
T\right\vert ^{2\alpha }\right) \right] ^{1/2}\left[ \limfunc{tr}\left(
\left\vert VA\right\vert ^{2}\left\vert T^{\ast }\right\vert ^{2\left(
1-\alpha \right) }\right) \right] ^{1/2}-\left\vert \limfunc{tr}\left(
\left\vert VA\right\vert ^{2}T\right) \right\vert  \label{e.3.6} \\
& \geq \left[ \limfunc{tr}\left( \left\vert UA\right\vert ^{2}\left\vert
T\right\vert ^{2\alpha }\right) \right] ^{1/2}\left[ \limfunc{tr}\left(
\left\vert UA\right\vert ^{2}\left\vert T^{\ast }\right\vert ^{2\left(
1-\alpha \right) }\right) \right] ^{1/2}-\left\vert \limfunc{tr}\left(
\left\vert UA\right\vert ^{2}T\right) \right\vert \left( \geq 0\right) . 
\notag
\end{align}

If $U\in \mathcal{B}\left( H\right) $ is invertible, then 
\begin{equation*}
\frac{1}{\left\Vert U^{-1}\right\Vert }\left\Vert x\right\Vert \leq
\left\Vert Ux\right\Vert \leq \left\Vert U\right\Vert \left\Vert
x\right\Vert \text{ for any }x\in H,
\end{equation*}%
which implies that%
\begin{equation*}
\frac{1}{\left\Vert U^{-1}\right\Vert ^{2}}1_{H}\leq \left\vert U\right\vert
^{2}\leq \left\Vert U\right\Vert ^{2}1_{H}.
\end{equation*}%
Utilising (\ref{e.3.4}) we get%
\begin{align}
& \left\Vert U\right\Vert ^{2}\left( \left[ \limfunc{tr}\left( \left\vert
A\right\vert ^{2}\left\vert T\right\vert ^{2\alpha }\right) \right] ^{1/2}%
\left[ \limfunc{tr}\left( \left\vert A\right\vert ^{2}\left\vert T^{\ast
}\right\vert ^{2\left( 1-\alpha \right) }\right) \right] ^{1/2}-\left\vert 
\limfunc{tr}\left( \left\vert A\right\vert ^{2}T\right) \right\vert \right) 
\label{e.3.7} \\
& \geq \left[ \limfunc{tr}\left( \left\vert UA\right\vert ^{2}\left\vert
T\right\vert ^{2\alpha }\right) \right] ^{1/2}\left[ \limfunc{tr}\left(
\left\vert UA\right\vert ^{2}\left\vert T^{\ast }\right\vert ^{2\left(
1-\alpha \right) }\right) \right] ^{1/2}-\left\vert \limfunc{tr}\left(
\left\vert UA\right\vert ^{2}T\right) \right\vert   \notag \\
& \geq \frac{1}{\left\Vert U^{-1}\right\Vert ^{2}}\left( \left[ \limfunc{tr}%
\left( \left\vert A\right\vert ^{2}\left\vert T\right\vert ^{2\alpha
}\right) \right] ^{1/2}\left[ \limfunc{tr}\left( \left\vert A\right\vert
^{2}\left\vert T^{\ast }\right\vert ^{2\left( 1-\alpha \right) }\right) %
\right] ^{1/2}-\left\vert \limfunc{tr}\left( \left\vert A\right\vert
^{2}T\right) \right\vert \right) .  \notag
\end{align}

\section{Inequalities for Sequences of Operators}

For $n\geq 2,$ define the Cartesian products $\mathcal{B}^{\left( n\right)
}\left( H\right) :=$ $\mathcal{B}\left( H\right) \times ...\times \mathcal{B}%
\left( H\right) ,$ $\mathcal{B}_{2}^{\left( n\right) }\left( H\right) :=%
\mathcal{B}_{2}\left( H\right) \times ...\times \mathcal{B}_{2}\left(
H\right) $ and $\mathcal{B}_{+}^{\left( n\right) }\left( H\right) :=$ $%
\mathcal{B}_{+}\left( H\right) \times ...\times \mathcal{B}_{+}\left(
H\right) $ where $\mathcal{B}_{+}\left( H\right) $ denotes the convex cone
of nonnegative selfadjoint operators on $H,$ i.e. $P\in \mathcal{B}%
_{+}\left( H\right) $ if $\left\langle Px,x\right\rangle \geq 0$ for any $%
x\in H.$

\begin{proposition}
\label{p.4.1}Let $\mathbf{P}=\left( P_{1},...,P_{n}\right) \in \mathcal{B}%
_{+}^{\left( n\right) }\left( H\right) ,$ $\mathbf{T}=\left(
T_{1},...,T_{n}\right) \in \mathcal{B}^{\left( n\right) }\left( H\right) $, $%
\mathbf{A}=\left( A_{1},...,A_{n}\right) $ $\in \mathcal{B}_{2}^{\left(
n\right) }\left( H\right) $ and $\mathbf{z}=\left( z_{1},...,z_{n}\right)
\in \mathbb{C}^{n}$ with $n\geq 2.$ Then%
\begin{align}
& \left\vert \limfunc{tr}\left(
\sum_{k=1}^{n}z_{k}P_{k}A_{k}T_{k}A_{k}^{\ast }\right) \right\vert ^{2}
\label{e.4.1} \\
& \leq \limfunc{tr}\left( \sum_{k=1}^{n}\left\vert z_{k}\right\vert
P_{k}A_{k}\left\vert T_{k}\right\vert ^{2\alpha }A_{k}^{\ast }\right) 
\limfunc{tr}\left( \sum_{k=1}^{n}\left\vert z_{k}\right\vert
P_{k}A_{k}\left\vert T_{k}^{\ast }\right\vert ^{2\left( 1-\alpha \right)
}A_{k}^{\ast }\right)  \notag
\end{align}%
for any $\alpha \in \left[ 0,1\right] .$
\end{proposition}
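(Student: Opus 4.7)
The plan is to reduce the claimed multi-operator inequality to the single-operator inequality (3.0.1) applied termwise, combined with the triangle inequality for the modulus and the Cauchy--Bunyakovsky--Schwarz inequality for finite sums.

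First I would apply (3.0.1) to each triple $(P_k, A_k, T_k)$ individually. Since $P_k \in \mathcal{B}_{+}(H)$, $A_k \in \mathcal{B}_{2}(H)$, and $T_k \in \mathcal{B}(H)$, this yields, after taking square roots,
\begin{equation*}
\lvert \mathrm{tr}(P_k A_k T_k A_k^{\ast}) \rvert \leq \bigl[\mathrm{tr}(P_k A_k \lvert T_k \rvert^{2\alpha} A_k^{\ast})\bigr]^{1/2} \bigl[\mathrm{tr}(P_k A_k \lvert T_k^{\ast} \rvert^{2(1-\alpha)} A_k^{\ast})\bigr]^{1/2}
\end{equation*}
for every $k \in \{1,\dots,n\}$ and every $\alpha \in [0,1]$.

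Next I would use linearity of the trace and the triangle inequality for the modulus to write
\begin{equation*}
\Bigl\lvert \mathrm{tr}\Bigl(\textstyle\sum_{k=1}^{n} z_k P_k A_k T_k A_k^{\ast}\Bigr) \Bigr\rvert \leq \sum_{k=1}^{n} \lvert z_k \rvert \, \lvert \mathrm{tr}(P_k A_k T_k A_k^{\ast}) \rvert,
\end{equation*}
and then substitute the pointwise bound above. Splitting each $\lvert z_k \rvert$ as $\lvert z_k \rvert^{1/2} \cdot \lvert z_k \rvert^{1/2}$ I would apply the Cauchy--Bunyakovsky--Schwarz inequality for finite sums with
\begin{equation*}
a_k := \lvert z_k \rvert^{1/2} \bigl[\mathrm{tr}(P_k A_k \lvert T_k \rvert^{2\alpha} A_k^{\ast})\bigr]^{1/2}, \qquad b_k := \lvert z_k \rvert^{1/2} \bigl[\mathrm{tr}(P_k A_k \lvert T_k^{\ast} \rvert^{2(1-\alpha)} A_k^{\ast})\bigr]^{1/2},
\end{equation*}
obtaining a product of two square roots of sums of traces. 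Finally, pulling the scalar coefficients back inside via linearity of the trace recovers
\begin{equation*}
\sum_{k=1}^{n} \lvert z_k \rvert \, \mathrm{tr}(P_k A_k \lvert T_k \rvert^{2\alpha} A_k^{\ast}) = \mathrm{tr}\Bigl(\textstyle\sum_{k=1}^{n} \lvert z_k \rvert P_k A_k \lvert T_k \rvert^{2\alpha} A_k^{\ast}\Bigr),
\end{equation*}
and similarly for the second factor. Squaring both sides then produces (4.1).

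There is no genuine obstacle here: all the operators $P_k A_k \lvert T_k \rvert^{2\alpha} A_k^{\ast}$ and $P_k A_k \lvert T_k^{\ast} \rvert^{2(1-\alpha)} A_k^{\ast}$ lie in $\mathcal{B}_{1}(H)$ because $A_k, A_k^{\ast} \in \mathcal{B}_{2}(H)$ and the ideal property $\mathcal{B}_{2}(H)\mathcal{B}_{2}(H) = \mathcal{B}_{1}(H)$ from Theorem~\ref{t.1.2}(iii) absorbs the bounded middle factors, so every trace appearing above is well-defined. The only mildly delicate point is the careful splitting of $\lvert z_k \rvert$ between the two Cauchy--Schwarz factors so that the resulting bound has the symmetric form in (4.1) rather than a lopsided one.
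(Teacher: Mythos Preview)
Your proposal is correct and follows essentially the same route as the paper: apply the single-operator inequality (3.0.1) termwise, use the triangle inequality for the modulus, and then the weighted Cauchy--Bunyakovsky--Schwarz inequality (your splitting of $\lvert z_k\rvert$ as $\lvert z_k\rvert^{1/2}\cdot\lvert z_k\rvert^{1/2}$ is exactly the weighted form the paper invokes). The arguments are interchangeable.
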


\begin{proof}
Using the properties of modulus and the inequality (\ref{e.3.0.1}) we have%
\begin{align*}
& \left\vert \limfunc{tr}\left(
\sum_{k=1}^{n}z_{k}P_{k}A_{k}T_{k}A_{k}^{\ast }\right) \right\vert \\
& =\left\vert \sum_{k=1}^{n}z_{k}\limfunc{tr}\left(
P_{k}A_{k}T_{k}A_{k}^{\ast }\right) \right\vert \leq
\sum_{k=1}^{n}\left\vert z_{k}\right\vert \left\vert \limfunc{tr}\left(
P_{k}A_{k}T_{k}A_{k}^{\ast }\right) \right\vert \\
& \leq \sum_{k=1}^{n}\left\vert z_{k}\right\vert \left[ \limfunc{tr}\left(
P_{k}A_{k}\left\vert T_{k}\right\vert ^{2\alpha }A_{k}^{\ast }\right) \right]
^{1/2}\left[ \limfunc{tr}\left( P_{k}A_{k}\left\vert T_{k}^{\ast
}\right\vert ^{2\left( 1-\alpha \right) }A_{k}^{\ast }\right) \right] ^{1/2}.
\end{align*}%
Utilizing the weighted discrete Cauchy-Bunyakovsky-Schwarz inequality we
also have%
\begin{align*}
& \sum_{k=1}^{n}\left\vert z_{k}\right\vert \left[ \limfunc{tr}\left(
P_{k}A_{k}\left\vert T_{k}\right\vert ^{2\alpha }A_{k}^{\ast }\right) \right]
^{1/2}\left[ \limfunc{tr}\left( P_{k}A_{k}\left\vert T_{k}^{\ast
}\right\vert ^{2\left( 1-\alpha \right) }A_{k}^{\ast }\right) \right] ^{1/2}
\\
& \leq \left( \sum_{k=1}^{n}\left\vert z_{k}\right\vert \left( \left[ 
\limfunc{tr}\left( P_{k}A_{k}\left\vert T_{k}\right\vert ^{2\alpha
}A_{k}^{\ast }\right) \right] ^{1/2}\right) ^{2}\right) ^{1/2} \\
& \times \left( \sum_{k=1}^{n}\left\vert z_{k}\right\vert \left( \left[ 
\limfunc{tr}\left( P_{k}A_{k}\left\vert T_{k}^{\ast }\right\vert ^{2\left(
1-\alpha \right) }A_{k}^{\ast }\right) \right] ^{1/2}\right) ^{2}\right)
^{1/2} \\
& =\left( \sum_{k=1}^{n}\left\vert z_{k}\right\vert \limfunc{tr}\left(
P_{k}A_{k}\left\vert T_{k}\right\vert ^{2\alpha }A_{k}^{\ast }\right)
\right) ^{1/2}\left( \sum_{k=1}^{n}\left\vert z_{k}\right\vert \limfunc{tr}%
\left( P_{k}A_{k}\left\vert T_{k}^{\ast }\right\vert ^{2\left( 1-\alpha
\right) }A_{k}^{\ast }\right) \right) ^{1/2},
\end{align*}%
which imply the desired result (\ref{e.4.1}).
\end{proof}

\begin{remark}
\label{r.4.1}If we take $P_{k}=1_{H}$ for any $k\in \left\{ 1,...,n\right\} $
in (\ref{e.4.1}), then we have the simpler inequality%
\begin{align}
& \left\vert \limfunc{tr}\left( \sum_{k=1}^{n}z_{k}\left\vert
A_{k}\right\vert ^{2}T_{k}\right) \right\vert ^{2}  \label{e.4.1.a} \\
& \leq \limfunc{tr}\left( \sum_{k=1}^{n}\left\vert z_{k}\right\vert
\left\vert A_{k}\right\vert ^{2}\left\vert T_{k}\right\vert ^{2\alpha
}\right) \limfunc{tr}\left( \sum_{k=1}^{n}\left\vert z_{k}\right\vert
\left\vert A_{k}\right\vert ^{2}\left\vert T_{k}^{\ast }\right\vert
^{2\left( 1-\alpha \right) }\right)  \notag
\end{align}%
provided that $\mathbf{T}=\left( T_{1},...,T_{n}\right) \in \mathcal{B}%
^{\left( n\right) }\left( H\right) $, $\mathbf{A}=\left(
A_{1},...,A_{n}\right) $ $\in \mathcal{B}_{2}^{\left( n\right) }\left(
H\right) ,$ $\alpha \in \left[ 0,1\right] $ and $\mathbf{z}=\left(
z_{1},...,z_{n}\right) \in \mathbb{C}^{n}.$
\end{remark}

We consider the functional for $n$-tuples of nonnegative operators $\mathbf{P%
}=\left( P_{1},...,P_{n}\right) \in \mathcal{B}_{+}^{\left( n\right) }\left(
H\right) $ as follows:%
\begin{align}
\sigma _{\mathbf{A},\mathbf{T,}\alpha }\left( \mathbf{P}\right) & :=\left[ 
\limfunc{tr}\left( \sum_{k=1}^{n}P_{k}A_{k}\left\vert T_{k}\right\vert
^{2\alpha }A_{k}^{\ast }\right) \right] ^{1/2}  \label{e.4.2} \\
& \times \left[ \limfunc{tr}\left( \sum_{k=1}^{n}P_{k}A_{k}\left\vert
T_{k}^{\ast }\right\vert ^{2\left( 1-\alpha \right) }A_{k}^{\ast }\right) %
\right] ^{1/2}-\left\vert \limfunc{tr}\left(
\sum_{k=1}^{n}P_{k}A_{k}T_{k}A_{k}^{\ast }\right) \right\vert ,  \notag
\end{align}%
where $\mathbf{T}=\left( T_{1},...,T_{n}\right) \in \mathcal{B}^{\left(
n\right) }\left( H\right) $, $\mathbf{A}=\left( A_{1},...,A_{n}\right) $ $%
\in \mathcal{B}_{2}^{\left( n\right) }\left( H\right) $ and $\alpha \in %
\left[ 0,1\right] .$

Utilising a similar argument to the one in Theorem \ref{t.3.2} we can state:

\begin{proposition}
\label{p.4.2}Let $\mathbf{T}=\left( T_{1},...,T_{n}\right) \in \mathcal{B}%
^{\left( n\right) }\left( H\right) $, $\mathbf{A}=\left(
A_{1},...,A_{n}\right) $ $\in \mathcal{B}_{2}^{\left( n\right) }\left(
H\right) $ and $\alpha \in \left[ 0,1\right] .$

(i) For any $\mathbf{P},$ $\mathbf{Q}\in \mathcal{B}_{+}^{\left( n\right)
}\left( H\right) $ we have%
\begin{equation}
\sigma _{\mathbf{A},\mathbf{T,}\alpha }\left( \mathbf{P}+\mathbf{Q}\right)
\geq \sigma _{\mathbf{A},\mathbf{T,}\alpha }\left( \mathbf{P}\right) +\sigma
_{\mathbf{A},\mathbf{T,}\alpha }\left( \mathbf{Q}\right) \left( \geq
0\right) ,  \label{e.4.3}
\end{equation}%
namely, $\sigma _{\mathbf{A},\mathbf{T,}\alpha }$ is a superadditive
functional on $\mathcal{B}_{+}^{\left( n\right) }\left( H\right) ;$

(ii) For any $\mathbf{P},$ $\mathbf{Q}\in \mathcal{B}_{+}^{\left( n\right)
}\left( H\right) $ with $\mathbf{P}\geq \mathbf{Q,}$ namely $P_{k}\geq Q_{k}$
for all $k\in \left\{ 1,...,n\right\} $ we have%
\begin{equation}
\sigma _{\mathbf{A},\mathbf{T,}\alpha }\left( \mathbf{P}\right) \geq \sigma
_{\mathbf{A},\mathbf{T,}\alpha }\left( \mathbf{Q}\right) \left( \geq
0\right) ,  \label{e.4.4}
\end{equation}%
namely, $\sigma _{\mathbf{A},\mathbf{B}}$ is a monotonic nondecreasing
functional on $\mathcal{B}_{+}^{\left( n\right) }\left( H\right) ;$

(iii) If $\mathbf{P},$ $\mathbf{Q}\in \mathcal{B}_{+}^{\left( n\right)
}\left( H\right) $ and there exist the constants $M>m>0$ such that $M\mathbf{%
Q}\geq $ $\mathbf{P}\geq m\mathbf{Q}$ then%
\begin{equation}
M\sigma _{\mathbf{A},\mathbf{T,}\alpha }\left( \mathbf{Q}\right) \geq \sigma
_{\mathbf{A},\mathbf{T,}\alpha }\left( \mathbf{P}\right) \geq m\sigma _{%
\mathbf{A},\mathbf{T,}\alpha }\left( \mathbf{Q}\right) \left( \geq 0\right) .
\label{e.4.5}
\end{equation}
\end{proposition}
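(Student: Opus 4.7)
The plan is to mimic the proof of Theorem \ref{t.3.2} almost verbatim, with the single operators $P, Q$ replaced by $n$-tuples and the single trace expressions replaced by sums over $k$. The key observation is that linearity of the trace lets us split
\[
\limfunc{tr}\Bigl(\sum_{k=1}^{n}(P_{k}+Q_{k})A_{k}|T_{k}|^{2\alpha}A_{k}^{\ast}\Bigr)=\limfunc{tr}\Bigl(\sum_{k=1}^{n}P_{k}A_{k}|T_{k}|^{2\alpha}A_{k}^{\ast}\Bigr)+\limfunc{tr}\Bigl(\sum_{k=1}^{n}Q_{k}A_{k}|T_{k}|^{2\alpha}A_{k}^{\ast}\Bigr),
\]
and analogously for the factor with $|T_{k}^{\ast}|^{2(1-\alpha)}$ and for the central tr-term. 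Note also that $\sigma_{\mathbf{A},\mathbf{T},\alpha}(\mathbf{P})\geq 0$ is exactly the content of Proposition \ref{p.4.1} with $z_{k}=1$; we will need that nonnegativity for part (ii).

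For part (i), I would set
\[
a^{2}:=\limfunc{tr}\Bigl(\sum_{k}P_{k}A_{k}|T_{k}|^{2\alpha}A_{k}^{\ast}\Bigr),\quad b^{2}:=\limfunc{tr}\Bigl(\sum_{k}Q_{k}A_{k}|T_{k}|^{2\alpha}A_{k}^{\ast}\Bigr),
\]
\[
c^{2}:=\limfunc{tr}\Bigl(\sum_{k}P_{k}A_{k}|T_{k}^{\ast}|^{2(1-\alpha)}A_{k}^{\ast}\Bigr),\quad d^{2}:=\limfunc{tr}\Bigl(\sum_{k}Q_{k}A_{k}|T_{k}^{\ast}|^{2(1-\alpha)}A_{k}^{\ast}\Bigr),
\]
each of which is nonnegative (the operators $P_{k}A_{k}|T_{k}|^{2\alpha}A_{k}^{\ast}$ are products that yield nonnegative traces after cyclic rearrangement, as in Section 3). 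Then the elementary inequality $(a^{2}+b^{2})^{1/2}(c^{2}+d^{2})^{1/2}\geq ac+bd$ combined with the triangle inequality $|\limfunc{tr}(\sum_{k}(P_{k}+Q_{k})A_{k}T_{k}A_{k}^{\ast})|\leq|\limfunc{tr}(\sum_{k}P_{k}A_{k}T_{k}A_{k}^{\ast})|+|\limfunc{tr}(\sum_{k}Q_{k}A_{k}T_{k}A_{k}^{\ast})|$ yields (\ref{e.4.3}) after minor rearrangement.

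For part (ii), writing $\mathbf{P}=(\mathbf{P}-\mathbf{Q})+\mathbf{Q}$ with $\mathbf{P}-\mathbf{Q}\in\mathcal{B}_{+}^{(n)}(H)$ and applying the superadditivity just proved gives $\sigma_{\mathbf{A},\mathbf{T},\alpha}(\mathbf{P})\geq\sigma_{\mathbf{A},\mathbf{T},\alpha}(\mathbf{P}-\mathbf{Q})+\sigma_{\mathbf{A},\mathbf{T},\alpha}(\mathbf{Q})\geq\sigma_{\mathbf{A},\mathbf{T},\alpha}(\mathbf{Q})$, where the last inequality uses the nonnegativity of $\sigma_{\mathbf{A},\mathbf{T},\alpha}$ on $\mathcal{B}_{+}^{(n)}(H)$ supplied by Proposition \ref{p.4.1}. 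For part (iii), note that $\sigma_{\mathbf{A},\mathbf{T},\alpha}$ is positively homogeneous (scaling each $P_{k}$ by a positive constant $\lambda$ scales both the product of the two square-root factors and the modulus term by $\lambda$, since tr is linear and $|\lambda z|=\lambda|z|$). Hence $\sigma_{\mathbf{A},\mathbf{T},\alpha}(m\mathbf{Q})=m\sigma_{\mathbf{A},\mathbf{T},\alpha}(\mathbf{Q})$ and $\sigma_{\mathbf{A},\mathbf{T},\alpha}(M\mathbf{Q})=M\sigma_{\mathbf{A},\mathbf{T},\alpha}(\mathbf{Q})$, and applying monotonicity to the comparisons $m\mathbf{Q}\leq\mathbf{P}\leq M\mathbf{Q}$ completes (\ref{e.4.5}).

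No step looks like a genuine obstacle; the only mild bookkeeping point is checking that the key tr-quantities are indeed real and nonnegative so that the elementary scalar inequality applies, but this follows because $\sum_{k}P_{k}A_{k}|T_{k}|^{2\alpha}A_{k}^{\ast}$ is a sum of operators of the form $P_{k}^{1/2}(P_{k}^{1/2}A_{k}|T_{k}|^{2\alpha}A_{k}^{\ast})$ whose traces agree with $\limfunc{tr}(|T_{k}|^{\alpha}A_{k}^{\ast}P_{k}A_{k}|T_{k}|^{\alpha})\geq 0$ by cyclicity, and similarly for the $|T_{k}^{\ast}|^{2(1-\alpha)}$ piece.
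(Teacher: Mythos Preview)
Your proposal is correct and follows exactly the approach the paper itself indicates: the paper does not spell out a separate proof of Proposition~\ref{p.4.2} but simply says ``Utilising a similar argument to the one in Theorem~\ref{t.3.2},'' and your write-up is precisely that argument, carried over componentwise to $n$-tuples with the same elementary inequality $(a^{2}+b^{2})^{1/2}(c^{2}+d^{2})^{1/2}\geq ac+bd$, the triangle inequality, and the superadditivity-to-monotonicity-to-homogeneity chain. Your extra care in justifying nonnegativity of the trace quantities via cyclicity is a welcome detail that the paper leaves implicit.
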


If $\mathbf{P=}\left( p_{1}1_{H},...,p_{n}1_{H}\right) $ with $p_{k}\geq 0,$ 
$k\in \left\{ 1,...,n\right\} $ then the functional of real nonnegative
weights $\mathbf{p=}\left( p_{1},...,p_{n}\right) $ defined by%
\begin{align}
\sigma _{\mathbf{A},\mathbf{T,}\alpha }\left( \mathbf{p}\right) & :=\left[ 
\limfunc{tr}\left( \sum_{k=1}^{n}p_{k}\left\vert A_{k}\right\vert
^{2}\left\vert T_{k}\right\vert ^{2\alpha }\right) \right] ^{1/2}
\label{e.4.6} \\
& \times \left[ \limfunc{tr}\left( \sum_{k=1}^{n}p_{k}\left\vert
A_{k}\right\vert ^{2}\left\vert T_{k}^{\ast }\right\vert ^{2\left( 1-\alpha
\right) }\right) \right] ^{1/2}-\left\vert \limfunc{tr}\left(
\sum_{k=1}^{n}p_{k}\left\vert A_{k}\right\vert ^{2}T_{k}\right) \right\vert 
\notag
\end{align}%
has the same properties as in Theorem \ref{t.3.2}.

Moreover, we have the simple bounds%
\begin{align}
& \max_{k\in \left\{ 1,...,n\right\} }\left\{ p_{k}\right\} \left( \left[ 
\limfunc{tr}\left( \sum_{k=1}^{n}\left\vert A_{k}\right\vert ^{2}\left\vert
T_{k}\right\vert ^{2\alpha }\right) \right] ^{1/2}\right.  \label{e.4.7} \\
& \left. \times \left[ \limfunc{tr}\left( \sum_{k=1}^{n}\left\vert
A_{k}\right\vert ^{2}\left\vert T_{k}^{\ast }\right\vert ^{2\left( 1-\alpha
\right) }\right) \right] ^{1/2}-\left\vert \limfunc{tr}\left(
\sum_{k=1}^{n}p_{k}\left\vert A_{k}\right\vert ^{2}T_{k}\right) \right\vert
\right)  \notag \\
& \geq \left[ \limfunc{tr}\left( \sum_{k=1}^{n}p_{k}\left\vert
A_{k}\right\vert ^{2}\left\vert T_{k}\right\vert ^{2\alpha }\right) \right]
^{1/2}\left[ \limfunc{tr}\left( \sum_{k=1}^{n}p_{k}\left\vert
A_{k}\right\vert ^{2}\left\vert T_{k}^{\ast }\right\vert ^{2\left( 1-\alpha
\right) }\right) \right] ^{1/2}  \notag \\
& -\left\vert \limfunc{tr}\left( \sum_{k=1}^{n}p_{k}\left\vert
A_{k}\right\vert ^{2}T_{k}\right) \right\vert  \notag \\
& \geq \min_{k\in \left\{ 1,...,n\right\} }\left\{ p_{k}\right\} \left( 
\left[ \limfunc{tr}\left( \sum_{k=1}^{n}\left\vert A_{k}\right\vert
^{2}\left\vert T_{k}\right\vert ^{2\alpha }\right) \right] ^{1/2}\right. 
\notag \\
& \left. \times \left[ \limfunc{tr}\left( \sum_{k=1}^{n}\left\vert
A_{k}\right\vert ^{2}\left\vert T_{k}^{\ast }\right\vert ^{2\left( 1-\alpha
\right) }\right) \right] ^{1/2}-\left\vert \limfunc{tr}\left(
\sum_{k=1}^{n}p_{k}\left\vert A_{k}\right\vert ^{2}T_{k}\right) \right\vert
\right) .  \notag
\end{align}

\section{Inequalities for Power Series of Operators}

Denote by:%
\begin{equation*}
D(0,R)=\left\{ 
\begin{array}{ll}
\{z\in \mathbb{C}:\left\vert z\right\vert <R\}, & \quad \text{if $R<\infty $}
\\ 
\mathbb{C}, & \quad \text{if $R=\infty $},%
\end{array}%
\right.
\end{equation*}%
and consider the functions:%
\begin{equation*}
\lambda \mapsto f(\lambda ):D(0,R)\rightarrow \mathbb{C},\text{ }f(\lambda
):=\sum_{n=0}^{\infty }\alpha _{n}\lambda ^{n}
\end{equation*}%
and 
\begin{equation*}
\lambda \mapsto f_{a}(\lambda ):D(0,R)\rightarrow \mathbb{C},\text{ }%
f_{a}(\lambda ):=\sum_{n=0}^{\infty }\left\vert \alpha _{n}\right\vert
\lambda ^{n}.
\end{equation*}

As some natural examples that are useful for applications, we can point out
that, if 
\begin{align}
f\left( \lambda \right) & =\sum_{n=1}^{\infty }\frac{\left( -1\right) ^{n}}{n%
}\lambda ^{n}=\ln \frac{1}{1+\lambda },\text{ }\lambda \in D\left(
0,1\right) ;  \label{E1} \\
g\left( \lambda \right) & =\sum_{n=0}^{\infty }\frac{\left( -1\right) ^{n}}{%
\left( 2n\right) !}\lambda ^{2n}=\cos \lambda ,\text{ }\lambda \in \mathbb{C}%
\text{;}  \notag \\
h\left( \lambda \right) & =\sum_{n=0}^{\infty }\frac{\left( -1\right) ^{n}}{%
\left( 2n+1\right) !}\lambda ^{2n+1}=\sin \lambda ,\text{ }\lambda \in 
\mathbb{C}\text{;}  \notag \\
l\left( \lambda \right) & =\sum_{n=0}^{\infty }\left( -1\right) ^{n}\lambda
^{n}=\frac{1}{1+\lambda },\text{ }\lambda \in D\left( 0,1\right) ;  \notag
\end{align}%
then the corresponding functions constructed by the use of the absolute
values of the coefficients are%
\begin{align}
f_{a}\left( \lambda \right) & =\sum_{n=1}^{\infty }\frac{1}{n}\lambda
^{n}=\ln \frac{1}{1-\lambda },\text{ }\lambda \in D\left( 0,1\right) ;
\label{E2} \\
g_{a}\left( \lambda \right) & =\sum_{n=0}^{\infty }\frac{1}{\left( 2n\right)
!}\lambda ^{2n}=\cosh \lambda ,\text{ }\lambda \in \mathbb{C}\text{;}  \notag
\\
h_{a}\left( \lambda \right) & =\sum_{n=0}^{\infty }\frac{1}{\left(
2n+1\right) !}\lambda ^{2n+1}=\sinh \lambda ,\text{ }\lambda \in \mathbb{C}%
\text{;}  \notag \\
l_{a}\left( \lambda \right) & =\sum_{n=0}^{\infty }\lambda ^{n}=\frac{1}{%
1-\lambda },\text{ }\lambda \in D\left( 0,1\right) .  \notag
\end{align}%
Other important examples of functions as power series representations with
nonnegative coefficients are:%
\begin{align}
\exp \left( \lambda \right) & =\sum_{n=0}^{\infty }\frac{1}{n!}\lambda
^{n}\qquad \lambda \in \mathbb{C}\text{,}  \label{E3} \\
\frac{1}{2}\ln \left( \frac{1+\lambda }{1-\lambda }\right) &
=\sum_{n=1}^{\infty }\frac{1}{2n-1}\lambda ^{2n-1},\qquad \lambda \in
D\left( 0,1\right) ;  \notag \\
\sin ^{-1}\left( \lambda \right) & =\sum_{n=0}^{\infty }\frac{\Gamma \left(
n+\frac{1}{2}\right) }{\sqrt{\pi }\left( 2n+1\right) n!}\lambda
^{2n+1},\qquad \lambda \in D\left( 0,1\right) ;  \notag \\
\tanh ^{-1}\left( \lambda \right) & =\sum_{n=1}^{\infty }\frac{1}{2n-1}%
\lambda ^{2n-1},\qquad \lambda \in D\left( 0,1\right)  \notag \\
_{2}F_{1}\left( \alpha ,\beta ,\gamma ,\lambda \right) & =\sum_{n=0}^{\infty
}\frac{\Gamma \left( n+\alpha \right) \Gamma \left( n+\beta \right) \Gamma
\left( \gamma \right) }{n!\Gamma \left( \alpha \right) \Gamma \left( \beta
\right) \Gamma \left( n+\gamma \right) }\lambda ^{n},\alpha ,\beta ,\gamma
>0,  \notag \\
\lambda & \in D\left( 0,1\right) ;  \notag
\end{align}%
where $\Gamma $ is \textit{Gamma function}.

\begin{theorem}
\label{t.5.1}Let $f(\lambda ):=\sum_{n=1}^{\infty }\alpha _{n}\lambda ^{n}$
be a power series with complex coefficients and convergent on the open disk $%
D\left( 0,R\right) ,$ $R>0.$ Let $N\in \mathcal{B}\left( H\right) $ be a
normal operator. If for some $\alpha \in \left( 0,1\right) $ we have $%
\left\vert N\right\vert ^{2\alpha },$ $\left\vert N\right\vert ^{2\left(
1-\alpha \right) }\in \mathcal{B}_{1}\left( H\right) $ with $\limfunc{tr}%
\left( \left\vert N\right\vert ^{2\alpha }\right) ,$ $\limfunc{tr}\left(
\left\vert N\right\vert ^{2\left( 1-\alpha \right) }\right) <R,$ then we
have the inequality%
\begin{equation}
\left\vert \limfunc{tr}\left( f\left( N\right) \right) \right\vert ^{2}\leq 
\limfunc{tr}\left( f_{a}\left( \left\vert N\right\vert ^{2\alpha }\right)
\right) \limfunc{tr}\left( f_{a}\left( \left\vert N\right\vert ^{2\left(
1-\alpha \right) }\right) \right) .  \label{e.5.1}
\end{equation}
\end{theorem}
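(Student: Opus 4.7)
The plan is to apply Corollary \ref{c.2.2} termwise to each $N^{k}$, combine the resulting scalar estimates via Cauchy--Bunyakovsky--Schwarz, and then identify the right-hand sides as traces of $f_{a}$ applied to $|N|^{2\alpha}$ and $|N|^{2(1-\alpha)}$.

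First I would exploit normality: since $N$ is normal, so is each power $N^{k}$, and $|N^{k}|^{2}=(N^{k})^{\ast}N^{k}=(N^{\ast}N)^{k}$ yields $|N^{k}|^{2\alpha}=|N|^{2\alpha k}$ and $|N^{k}|^{2(1-\alpha)}=|N|^{2(1-\alpha)k}$. Since $|N|^{2\alpha}$ is positive trace class with $\limfunc{tr}(|N|^{2\alpha})<R$, it is compact with eigenvalues $\lambda_{i}\ge 0$ satisfying $\max_{i}\lambda_{i}\le\sum_{i}\lambda_{i}<R$; in particular $|N|^{2\alpha k}=(|N|^{2\alpha})^{k}\in\mathcal{B}_{1}(H)$ for every $k\ge 1$, and analogously for $|N|^{2(1-\alpha)k}$. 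Corollary \ref{c.2.2} applied to the normal operator $N^{k}$ then produces $N^{k}\in\mathcal{B}_{1}(H)$ together with
\[
|\limfunc{tr}(N^{k})|^{2}\le\limfunc{tr}(|N|^{2\alpha k})\,\limfunc{tr}(|N|^{2(1-\alpha)k}),\qquad k\ge 1.
\]

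Multiplying by $|\alpha_{k}|$, summing in $k$, and using the triangle inequality together with the scalar Cauchy--Bunyakovsky--Schwarz inequality yields
\begin{align*}
\sum_{k=1}^{\infty}|\alpha_{k}|\,|\limfunc{tr}(N^{k})|
&\le\sum_{k=1}^{\infty}\bigl(|\alpha_{k}|\limfunc{tr}(|N|^{2\alpha k})\bigr)^{1/2}\bigl(|\alpha_{k}|\limfunc{tr}(|N|^{2(1-\alpha)k})\bigr)^{1/2}\\
&\le\Bigl(\sum_{k=1}^{\infty}|\alpha_{k}|\limfunc{tr}(|N|^{2\alpha k})\Bigr)^{1/2}\Bigl(\sum_{k=1}^{\infty}|\alpha_{k}|\limfunc{tr}(|N|^{2(1-\alpha)k})\Bigr)^{1/2}.
\end{align*}
A Tonelli swap on the nonnegative double series, combined with the spectral-theorem identity $\limfunc{tr}(g(|N|^{2\alpha}))=\sum_{i}g(\lambda_{i})$ for nonnegative Borel $g$, rewrites the first factor as $\limfunc{tr}(f_{a}(|N|^{2\alpha}))$ and the second as $\limfunc{tr}(f_{a}(|N|^{2(1-\alpha)}))$. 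Both sums are finite because $f_{a}$ is continuous on $[0,R)$ with $f_{a}(0)=0$, so $f_{a}(t)/t$ is bounded on the compact interval $[0,\max_{i}\lambda_{i}]\subset[0,R)$ while $\sum_{i}\lambda_{i}<\infty$.

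The same absolute convergence in $\|\cdot\|_{1}$ shows that $\sum_{k=1}^{\infty}\alpha_{k}N^{k}$ converges in $\mathcal{B}_{1}(H)$ to $f(N)$, and continuity of the trace gives $\limfunc{tr}(f(N))=\sum_{k=1}^{\infty}\alpha_{k}\limfunc{tr}(N^{k})$; squaring the resulting estimate delivers (\ref{e.5.1}). The main technical point is the bookkeeping around the identification $\sum_{k\ge 1}|\alpha_{k}|\limfunc{tr}(|N|^{2\alpha k})=\limfunc{tr}(f_{a}(|N|^{2\alpha}))$, which is where the hypothesis $\limfunc{tr}(|N|^{2\alpha})<R$ is genuinely used: it forces every eigenvalue of $|N|^{2\alpha}$ into the disc of convergence of $f_{a}$, so that the Borel functional calculus for $f_{a}$ at $|N|^{2\alpha}$ is well defined and the Tonelli interchange is legitimate.
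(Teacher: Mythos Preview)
Your argument is correct and follows essentially the same route as the paper: apply Corollary~\ref{c.2.2} to each $N^{k}$ (using normality to write $|N^{k}|^{2\alpha}=|N|^{2\alpha k}$), combine the termwise bounds via the weighted Cauchy--Bunyakovsky--Schwarz inequality, and then pass to the infinite sum. The only cosmetic difference is that the paper works with finite partial sums and then lets $n\to\infty$ using continuity of the trace on $\mathcal{B}_{1}(H)$, whereas you sum directly and invoke Tonelli together with the spectral description of $|N|^{2\alpha}$ to justify the interchange and the identification with $\limfunc{tr}\bigl(f_{a}(|N|^{2\alpha})\bigr)$; your version is in fact a bit more explicit about why the right-hand side is finite.
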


\begin{proof}
Since $N$ is a normal operator, then for any natural number $k\geq 1$ we
have $\left\vert N^{k}\right\vert ^{2\alpha }=\left\vert N\right\vert
^{2\alpha k}$ and $\left\vert N^{k}\right\vert ^{2\left( 1-\alpha \right)
}=\left\vert N\right\vert ^{2\left( 1-\alpha \right) k}.$

By the generalized triangle inequality for the modulus we have for $n\geq 2$%
\begin{equation}
\left\vert \limfunc{tr}\left( \sum_{k=1}^{n}\alpha _{k}N^{k}\right)
\right\vert =\left\vert \sum_{k=1}^{n}\alpha _{k}\limfunc{tr}\left(
N^{k}\right) \right\vert \leq \sum_{k=1}^{n}\left\vert \alpha
_{k}\right\vert \left\vert \limfunc{tr}\left( N^{k}\right) \right\vert .
\label{e.5.2}
\end{equation}%
If for some $\alpha \in \left( 0,1\right) $ we have $\left\vert N\right\vert
^{2\alpha },$ $\left\vert N\right\vert ^{2\left( 1-\alpha \right) }\in 
\mathcal{B}_{1}\left( H\right) ,$ then by Corollary \ref{c.2.2} we have $%
N\in \mathcal{B}_{1}\left( H\right) .$ Now, since $N,$ $\left\vert
N\right\vert ^{2\alpha },$ $\left\vert N\right\vert ^{2\left( 1-\alpha
\right) }\in \mathcal{B}_{1}\left( H\right) $ then any natural power of
these operators belong to $\mathcal{B}_{1}\left( H\right) $ and by (\ref%
{e.2.10}) we have 
\begin{equation}
\left\vert \limfunc{tr}\left( N^{k}\right) \right\vert ^{2}\leq \limfunc{tr}%
\left( \left\vert N\right\vert ^{2\alpha k}\right) \limfunc{tr}\left(
\left\vert N\right\vert ^{2\left( 1-\alpha \right) k}\right) ,  \label{e.5.3}
\end{equation}%
for any natural number $k\geq 1.$

Making use of (\ref{e.5.3}) we have%
\begin{equation}
\sum_{k=1}^{n}\left\vert \alpha _{k}\right\vert \left\vert \limfunc{tr}%
\left( N^{k}\right) \right\vert \leq \sum_{k=1}^{n}\left\vert \alpha
_{k}\right\vert \left( \limfunc{tr}\left( \left\vert N\right\vert ^{2\alpha
k}\right) \right) ^{1/2}\left( \limfunc{tr}\left( \left\vert N\right\vert
^{2\left( 1-\alpha \right) k}\right) \right) ^{1/2}.  \label{e.5.4}
\end{equation}%
Utilising the weighted Cauchy-Bunyakovsky-Schwarz inequality for sums we
also have%
\begin{align}
& \sum_{k=1}^{n}\left\vert \alpha _{k}\right\vert \left( \limfunc{tr}\left(
\left\vert N\right\vert ^{2\alpha k}\right) \right) ^{1/2}\left( \limfunc{tr}%
\left( \left\vert N\right\vert ^{2\left( 1-\alpha \right) k}\right) \right)
^{1/2}  \label{e.5.5} \\
& \leq \left[ \sum_{k=1}^{n}\left\vert \alpha _{k}\right\vert \left( \left( 
\limfunc{tr}\left( \left\vert N\right\vert ^{2\alpha k}\right) \right)
^{1/2}\right) ^{2}\right] ^{1/2}  \notag \\
& \times \left[ \sum_{k=1}^{n}\left\vert \alpha _{k}\right\vert \left(
\left( \limfunc{tr}\left( \left\vert N\right\vert ^{2\left( 1-\alpha \right)
k}\right) \right) ^{1/2}\right) ^{2}\right] ^{1/2}  \notag \\
& =\left[ \sum_{k=1}^{n}\left\vert \alpha _{k}\right\vert \limfunc{tr}\left(
\left\vert N\right\vert ^{2\alpha k}\right) \right] ^{1/2}\left[
\sum_{k=1}^{n}\left\vert \alpha _{k}\right\vert \limfunc{tr}\left(
\left\vert N\right\vert ^{2\left( 1-\alpha \right) k}\right) \right] ^{1/2}.
\notag
\end{align}%
Making use of (\ref{e.5.2}), (\ref{e.5.4}) and (\ref{e.5.5}) we get the
inequality%
\begin{equation}
\left\vert \limfunc{tr}\left( \sum_{k=1}^{n}\alpha _{k}N^{k}\right)
\right\vert ^{2}\leq \limfunc{tr}\left( \sum_{k=1}^{n}\left\vert \alpha
_{k}\right\vert \left\vert N\right\vert ^{2\alpha k}\right) \limfunc{tr}%
\left( \sum_{k=1}^{n}\left\vert \alpha _{k}\right\vert \left\vert
N\right\vert ^{2\left( 1-\alpha \right) k}\right)  \label{e.5.6}
\end{equation}%
for any $n\geq 2.$

Due to the fact that $\limfunc{tr}\left( \left\vert N\right\vert ^{2\alpha
}\right) ,$ $\limfunc{tr}\left( \left\vert N\right\vert ^{2\left( 1-\alpha
\right) }\right) <R$ it follows by (\ref{e.2.10}) that $\limfunc{tr}\left(
\left\vert N\right\vert \right) <R$ and the operator series 
\begin{equation*}
\sum_{k=1}^{\infty }\alpha _{k}N^{k},\text{ }\sum_{k=1}^{\infty }\left\vert
\alpha _{k}\right\vert \left\vert N\right\vert ^{2\alpha k}\text{ and }%
\sum_{k=1}^{\infty }\left\vert \alpha _{k}\right\vert \left\vert
N\right\vert ^{2\left( 1-\alpha \right) k}
\end{equation*}%
are convergent in the Banach space $\mathcal{B}_{1}\left( H\right) .$

Taking the limit over $n\rightarrow \infty $ in (\ref{e.5.6}) and using the
continuity of the $\limfunc{tr}\left( \cdot \right) $ on $\mathcal{B}%
_{1}\left( H\right) $ we deduce the desired result (\ref{e.5.1}).
\end{proof}

\begin{example}
\label{Ex.1}a) If we take in $f(\lambda )=\left( 1\pm \lambda \right)
^{-1}-1=\mp \lambda \left( \left( 1\pm \lambda \right) ^{-1}\right) $, $%
\left\vert \lambda \right\vert <1$ then we get from (\ref{e.5.1}) the
inequality%
\begin{align}
& \left\vert \limfunc{tr}\left( N\left( \left( 1_{H}\pm N\right)
^{-1}\right) \right) \right\vert ^{2}  \label{e.5.7} \\
& \leq \limfunc{tr}\left( \left\vert N\right\vert ^{2\alpha }\left(
1_{H}-\left\vert N\right\vert ^{2\alpha }\right) ^{-1}\right) \limfunc{tr}%
\left( \left\vert N\right\vert ^{2\left( 1-\alpha \right) }\left(
1_{H}-\left\vert N\right\vert ^{2\left( 1-\alpha \right) }\right)
^{-1}\right) ,  \notag
\end{align}%
provided that $N\in \mathcal{B}\left( H\right) $ is a normal operator and
for $\alpha \in \left( 0,1\right) $ we have $\left\vert N\right\vert
^{2\alpha },$ $\left\vert N\right\vert ^{2\left( 1-\alpha \right) }\in 
\mathcal{B}_{1}\left( H\right) $ with $\limfunc{tr}\left( \left\vert
N\right\vert ^{2\alpha }\right) ,$ $\limfunc{tr}\left( \left\vert
N\right\vert ^{2\left( 1-\alpha \right) }\right) <1.$

b) If we take in (\ref{e.5.1}) $f(\lambda )=\exp \left( \lambda \right) -1$, 
$\lambda \in \mathbb{C}$ then we get the inequality%
\begin{equation}
\left\vert \limfunc{tr}\left( \exp \left( N\right) -1_{H}\right) \right\vert
^{2}\leq \limfunc{tr}\left( \exp \left( \left\vert N\right\vert ^{2\alpha
}\right) -1_{H}\right) \limfunc{tr}\left( \exp \left( \left\vert
N\right\vert ^{2\left( 1-\alpha \right) }\right) -1_{H}\right) ,
\label{e.5.8}
\end{equation}%
provided that $N\in \mathcal{B}\left( H\right) $ is a normal operator and
for $\alpha \in \left( 0,1\right) $ we have $\left\vert N\right\vert
^{2\alpha },$ $\left\vert N\right\vert ^{2\left( 1-\alpha \right) }\in 
\mathcal{B}_{1}\left( H\right) .$
\end{example}

The following result also holds:

\begin{theorem}
\label{t.5.2}Let $f(\lambda ):=\sum_{n=0}^{\infty }\alpha _{n}\lambda ^{n}$
be a power series with complex coefficients and convergent on the open disk $%
D\left( 0,R\right) ,$ $R>0.$ If $T\in \mathcal{B}\left( H\right) ,$ $A\in 
\mathcal{B}_{2}\left( H\right) $ are normal operators that double commute,
i.e. $TA=AT$ and $TA^{\ast }=A^{\ast }T$ and $\limfunc{tr}\left( \left\vert
A\right\vert ^{2}\left\vert T\right\vert ^{2\alpha }\right) ,$ $\limfunc{tr}%
\left( \left\vert A\right\vert ^{2}\left\vert T\right\vert ^{2\left(
1-\alpha \right) }\right) <R$ for some $\alpha \in \left[ 0,1\right] ,$\ then%
\begin{equation}
\left\vert \limfunc{tr}\left( f\left( \left\vert A\right\vert ^{2}T\right)
\right) \right\vert ^{2}\leq \limfunc{tr}\left( f_{a}\left( \left\vert
A\right\vert ^{2}\left\vert T\right\vert ^{2\alpha }\right) \right) \limfunc{%
tr}\left( f_{a}\left( \left\vert A\right\vert ^{2}\left\vert T\right\vert
^{2\left( 1-\alpha \right) }\right) \right) .  \label{e.5.9}
\end{equation}
\end{theorem}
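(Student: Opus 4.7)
The strategy is to reduce the theorem to the per-term inequality (\ref{e.2.27}) of Remark \ref{r.2.1}, applied along the partial sums of the power series, exactly as in the proof of Theorem \ref{t.5.1}. The first step is to extract the commutativity needed. Taking adjoints of $TA=AT$ and $TA^{\ast }=A^{\ast }T$ yields $T^{\ast }A^{\ast }=A^{\ast }T^{\ast }$ and $T^{\ast }A=AT^{\ast }$, so both $T$ and $T^{\ast }$ commute with $A$ and with $A^{\ast }$. Consequently $|T|^{2}=T^{\ast }T$ commutes with $A$, $A^{\ast }$ and with $|A|^{2}=A^{\ast }A$; by the continuous functional calculus applied to $|T|^{2}$, every $|T|^{\beta }$ with $\beta \geq 0$ inherits this commutativity, and in particular $|A|^{2}$ commutes with $T$, with $|T|^{2\alpha }$ and with $|T|^{2(1-\alpha )}$.

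The second step uses this commutativity to expand powers: for every integer $k\geq 1$,
\begin{equation*}
(|A|^{2}T)^{k}=|A|^{2k}T^{k},\qquad (|A|^{2}|T|^{2\alpha })^{k}=|A|^{2k}|T|^{2\alpha k},
\end{equation*}
and similarly with $\alpha $ replaced by $1-\alpha $. Since $T$ is normal, $|T^{k}|^{2}=(T^{\ast }T)^{k}=|T|^{2k}$, hence $|T^{k}|=|T|^{k}$ and likewise $|(T^{k})^{\ast }|=|T|^{k}$. Moreover $|A|^{k}\in \mathcal{B}_{2}(H)$, because $(|A|^{k})^{\ast }|A|^{k}=|A|^{2k}=|A|^{2}\cdot |A|^{2(k-1)}\in \mathcal{B}_{1}(H)$ by the operator-ideal property. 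Applying (\ref{e.2.27}) with $B:=|A|^{k}$ and with $T^{k}$ in the role of $T$ gives
\begin{equation*}
|\limfunc{tr}((|A|^{2}T)^{k})|^{2}\leq \limfunc{tr}\!\left( (|A|^{2}|T|^{2\alpha })^{k}\right) \limfunc{tr}\!\left( (|A|^{2}|T|^{2(1-\alpha )})^{k}\right) .
\end{equation*}

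With this key per-term estimate the argument parallels the proof of Theorem \ref{t.5.1} step by step: for the partial sum $S_{n}:=\sum_{k=1}^{n}\alpha _{k}(|A|^{2}T)^{k}$, the triangle inequality for the modulus followed by the weighted discrete Cauchy--Bunyakovsky--Schwarz inequality (with weights $|\alpha _{k}|$) produces
\begin{equation*}
|\limfunc{tr}(S_{n})|^{2}\leq \limfunc{tr}\!\left( \sum_{k=1}^{n}|\alpha _{k}|(|A|^{2}|T|^{2\alpha })^{k}\right) \limfunc{tr}\!\left( \sum_{k=1}^{n}|\alpha _{k}|(|A|^{2}|T|^{2(1-\alpha )})^{k}\right) .
\end{equation*}

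Finally I would verify convergence and pass to the limit. The operator $|A|^{2}|T|^{2\alpha }=(|A||T|^{\alpha })^{2}$ is positive and trace class (since $|A|^{2}\in \mathcal{B}_{1}(H)$ commutes with the bounded positive factor $|T|^{2\alpha }$), so its operator norm is dominated by its trace, which is strictly less than $R$; hence $\sum |\alpha _{k}|(|A|^{2}|T|^{2\alpha })^{k}$ converges in $\mathcal{B}_{1}(H)$, and similarly for the $(1-\alpha )$ series and for the series defining $f(|A|^{2}T)$ (interpreted, as in Theorem \ref{t.5.1}, with the constant term absorbed, since $1_{H}$ is not trace class in general). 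Continuity of $\limfunc{tr}(\cdot )$ on $\mathcal{B}_{1}(H)$ then delivers (\ref{e.5.9}) in the limit $n\to \infty $. The only place the hypotheses are used non-trivially is the commutativity-plus-power step: without double commutation one cannot collapse $(|A|^{2}T)^{k}$ to $|A|^{2k}T^{k}$, and without normality of $T$ one cannot identify $|T^{k}|$ with $|T|^{k}$; these are exactly the two ingredients that allow (\ref{e.2.27}) to be applied term by term, and I expect them to be the only conceptual (as opposed to computational) obstacle in the proof.
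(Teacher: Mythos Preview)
Your proof is correct and follows essentially the same route as the paper's. The only cosmetic difference is that the paper invokes the aggregated inequality (\ref{e.4.1.a}) directly (which already contains the Cauchy--Bunyakovsky--Schwarz step over the index $k$), whereas you apply the single-term inequality (\ref{e.2.27}) with $B=|A|^{k}$ and then perform the weighted CBS step by hand; the normality/double-commutation reductions and the passage to the limit are handled identically.
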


\begin{proof}
From the inequality (\ref{e.4.1.a}) we have 
\begin{align}
& \left\vert \limfunc{tr}\left( \sum_{k=0}^{n}\alpha _{k}\left\vert
A^{k}\right\vert ^{2}T^{k}\right) \right\vert ^{2}  \label{e.5.10} \\
& \leq \limfunc{tr}\left( \sum_{k=0}^{n}\left\vert \alpha _{k}\right\vert
\left\vert A^{k}\right\vert ^{2}\left\vert T^{k}\right\vert ^{2\alpha
}\right) \limfunc{tr}\left( \sum_{k=0}^{n}\left\vert \alpha _{k}\right\vert
\left\vert A^{k}\right\vert ^{2}\left\vert T^{k}\right\vert ^{2\left(
1-\alpha \right) }\right) .  \notag
\end{align}%
Since $A$ and $T$ are normal operators, then $\left\vert A^{k}\right\vert
^{2}=\left\vert A\right\vert ^{2k},$ $\left\vert T^{k}\right\vert ^{2\alpha
}=\left\vert T\right\vert ^{2\alpha k}$ and $\left\vert T^{k}\right\vert
^{2\left( 1-\alpha \right) }=\left\vert T\right\vert ^{2\left( 1-\alpha
\right) k}$ for any natural number $k\geq 0$ and $\alpha \in \left[ 0,1%
\right] .$

Since $T$ and $A$ double commute, then is easy to see that%
\begin{equation*}
\left\vert A\right\vert ^{2k}T^{k}=\left( \left\vert A\right\vert
^{2}T\right) ^{k},\text{ }\left\vert A\right\vert ^{2k}\left\vert
T\right\vert ^{2\alpha k}=\left( \left\vert A\right\vert ^{2}\left\vert
T\right\vert ^{2\alpha }\right) ^{k}
\end{equation*}%
and%
\begin{equation*}
\left\vert A\right\vert ^{2k}\left\vert T\right\vert ^{2\left( 1-\alpha
\right) k}=\left( \left\vert A\right\vert ^{2}\left\vert T\right\vert
^{2\left( 1-\alpha \right) }\right) ^{k}
\end{equation*}%
for any natural number $k\geq 0$ and $\alpha \in \left[ 0,1\right] .$

Therefore (\ref{e.5.10}) is equivalent to%
\begin{align}
& \left\vert \limfunc{tr}\left( \sum_{k=0}^{n}\alpha _{k}\left( \left\vert
A\right\vert ^{2}T\right) ^{k}\right) \right\vert ^{2}  \label{e.5.11} \\
& \leq \limfunc{tr}\left( \sum_{k=0}^{n}\left\vert \alpha _{k}\right\vert
\left( \left\vert A\right\vert ^{2}\left\vert T\right\vert ^{2\alpha
}\right) ^{k}\right) \limfunc{tr}\left( \sum_{k=0}^{n}\left\vert \alpha
_{k}\right\vert \left( \left\vert A\right\vert ^{2}\left\vert T\right\vert
^{2\left( 1-\alpha \right) }\right) ^{k}\right) ,  \notag
\end{align}%
for any natural number $n\geq 1$and $\alpha \in \left[ 0,1\right] .$

Due to the fact that $\limfunc{tr}\left( \left\vert A\right\vert
^{2}\left\vert T\right\vert ^{2\alpha }\right) ,$ $\limfunc{tr}\left(
\left\vert A\right\vert ^{2}\left\vert T\right\vert ^{2\left( 1-\alpha
\right) }\right) <R$ it follows by (\ref{e.4.1.a}) for $n=1$ that $\limfunc{%
tr}\left( \left\vert A\right\vert ^{2}T\right) <R$ and the operator series 
\begin{equation*}
\sum_{k=1}^{\infty }\alpha _{k}N^{k},\text{ }\sum_{k=1}^{\infty }\left\vert
\alpha _{k}\right\vert \left\vert N\right\vert ^{2\alpha k}\text{ and }%
\sum_{k=1}^{\infty }\left\vert \alpha _{k}\right\vert \left\vert
N\right\vert ^{2\left( 1-\alpha \right) k}
\end{equation*}%
are convergent in the Banach space $\mathcal{B}_{1}\left( H\right) .$

Taking the limit over $n\rightarrow \infty $ in (\ref{e.5.11}) and using the
continuity of the $\limfunc{tr}\left( \cdot \right) $ on $\mathcal{B}%
_{1}\left( H\right) $ we deduce the desired result (\ref{e.5.9}).
\end{proof}

\begin{example}
\label{Ex.2}a) If we take $f(\lambda )=\left( 1\pm \lambda \right) ^{-1}$, $%
\left\vert \lambda \right\vert <1$ then we get from (\ref{e.5.9}) the
inequality%
\begin{align}
& \left\vert \limfunc{tr}\left( \left( 1_{H}\pm \left\vert A\right\vert
^{2}T\right) ^{-1}\right) \right\vert ^{2}  \label{e.5.12} \\
& \leq \limfunc{tr}\left( \left( 1_{H}-\left\vert A\right\vert
^{2}\left\vert T\right\vert ^{2\alpha }\right) ^{-1}\right) \limfunc{tr}%
\left( \left( 1_{H}-\left\vert A\right\vert ^{2}\left\vert T\right\vert
^{2\left( 1-\alpha \right) }\right) ^{-1}\right) ,  \notag
\end{align}%
provided that $T\in \mathcal{B}\left( H\right) ,$ $A\in \mathcal{B}%
_{2}\left( H\right) $ are normal operators that double commute and $\limfunc{%
tr}\left( \left\vert A\right\vert ^{2}\left\vert T\right\vert ^{2\alpha
}\right) ,$ $\limfunc{tr}\left( \left\vert A\right\vert ^{2}\left\vert
T\right\vert ^{2\left( 1-\alpha \right) }\right) <1$ for $\alpha \in \left[
0,1\right] .$

b) If we take in (\ref{e.5.9}) $f(\lambda )=\exp \left( \lambda \right) $, $%
\lambda \in \mathbb{C}$ then we get the inequality%
\begin{equation}
\left\vert \limfunc{tr}\left( \exp \left( \left\vert A\right\vert
^{2}T\right) \right) \right\vert ^{2}\leq \limfunc{tr}\left( \exp \left(
\left\vert A\right\vert ^{2}\left\vert T\right\vert ^{2\alpha }\right)
\right) \limfunc{tr}\left( \exp \left( \left\vert A\right\vert
^{2}\left\vert T\right\vert ^{2\left( 1-\alpha \right) }\right) \right) ,
\label{e.5.13}
\end{equation}%
provided that $T\in \mathcal{B}\left( H\right) $ and $A\in \mathcal{B}%
_{2}\left( H\right) $ are normal operators that double commute and $\alpha
\in \left[ 0,1\right] .$
\end{example}

\begin{theorem}
\label{t.5.3}Let $f\left( z\right) :=\sum_{j=0}^{\infty }p_{j}z^{j}$ and $%
g\left( z\right) :=\sum_{j=0}^{\infty }q_{j}z^{j}$ be two power series with
nonnegative coefficients and convergent on the open disk $D\left( 0,R\right)
,$ $R>0.$ If $T\in \mathcal{B}\left( H\right) ,$ $A\in \mathcal{B}_{2}\left(
H\right) $ are normal operators that double commute and $\limfunc{tr}\left(
\left\vert A\right\vert ^{2}\left\vert T\right\vert ^{2\alpha }\right) ,$ $%
\limfunc{tr}\left( \left\vert A\right\vert ^{2}\left\vert T\right\vert
^{2\left( 1-\alpha \right) }\right) <R$ for $\alpha \in \left[ 0,1\right] ,$%
\ then%
\begin{align}
& \left[ \limfunc{tr}\left( f\left( \left\vert A\right\vert ^{2}\left\vert
T\right\vert ^{2\alpha }\right) +g\left( \left\vert A\right\vert
^{2}\left\vert T\right\vert ^{2\alpha }\right) \right) \right] ^{1/2}
\label{e.5.14} \\
& \times \left[ \limfunc{tr}\left( f\left( \left\vert A\right\vert
^{2}\left\vert T\right\vert ^{2\left( 1-\alpha \right) }\right) +g\left(
\left\vert A\right\vert ^{2}\left\vert T\right\vert ^{2\left( 1-\alpha
\right) }\right) \right) \right] ^{1/2}  \notag \\
& -\left\vert \limfunc{tr}\left( f\left( \left\vert A\right\vert
^{2}T\right) +g\left( \left\vert A\right\vert ^{2}T\right) \right)
\right\vert  \notag \\
& \geq \left[ \limfunc{tr}\left( f\left( \left\vert A\right\vert
^{2}\left\vert T\right\vert ^{2\alpha }\right) \right) \right] ^{1/2}\left[ 
\limfunc{tr}\left( f\left( \left\vert A\right\vert ^{2}\left\vert
T\right\vert ^{2\left( 1-\alpha \right) }\right) \right) \right] ^{1/2} 
\notag \\
& -\left\vert \limfunc{tr}\left( f\left( \left\vert A\right\vert
^{2}T\right) \right) \right\vert  \notag \\
& +\left[ \limfunc{tr}\left( g\left( \left\vert A\right\vert ^{2}\left\vert
T\right\vert ^{2\alpha }\right) \right) \right] ^{1/2}\left[ \limfunc{tr}%
\left( g\left( \left\vert A\right\vert ^{2}\left\vert T\right\vert ^{2\left(
1-\alpha \right) }\right) \right) \right] ^{1/2}  \notag \\
& -\left\vert \limfunc{tr}\left( g\left( \left\vert A\right\vert
^{2}T\right) \right) \right\vert \left( \geq 0\right) .  \notag
\end{align}

Moreover, if $p_{j}\geq q_{j}$ for any $j\in \mathbb{N}$, then, with the
above assumptions on $T$ and $A,$ we have%
\begin{align}
& \left[ \limfunc{tr}\left( f\left( \left\vert A\right\vert ^{2}\left\vert
T\right\vert ^{2\alpha }\right) \right) \right] ^{1/2}\left[ \limfunc{tr}%
\left( f\left( \left\vert A\right\vert ^{2}\left\vert T\right\vert ^{2\left(
1-\alpha \right) }\right) \right) \right] ^{1/2}  \label{e.5.15} \\
& -\left\vert \limfunc{tr}\left( f\left( \left\vert A\right\vert
^{2}T\right) \right) \right\vert  \notag \\
& \geq \left[ \limfunc{tr}\left( g\left( \left\vert A\right\vert
^{2}\left\vert T\right\vert ^{2\alpha }\right) \right) \right] ^{1/2}\left[ 
\limfunc{tr}\left( g\left( \left\vert A\right\vert ^{2}\left\vert
T\right\vert ^{2\left( 1-\alpha \right) }\right) \right) \right] ^{1/2} 
\notag \\
& -\left\vert \limfunc{tr}\left( g\left( \left\vert A\right\vert
^{2}T\right) \right) \right\vert \left( \geq 0\right) .  \notag
\end{align}
\end{theorem}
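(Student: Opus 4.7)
The plan is to read (\ref{e.5.14}) and (\ref{e.5.15}) as, respectively, the superadditivity and monotonicity of a single scalar functional on the cone of power series with nonnegative coefficients that satisfy the hypotheses of Theorem \ref{t.5.2}. Concretely, for any such $\phi(z)=\sum_{j=0}^{\infty}c_{j}z^{j}$ define
\[
\Sigma(\phi):=\bigl[\limfunc{tr}\bigl(\phi(|A|^{2}|T|^{2\alpha})\bigr)\bigr]^{1/2}\bigl[\limfunc{tr}\bigl(\phi(|A|^{2}|T|^{2(1-\alpha)})\bigr)\bigr]^{1/2}-\bigl|\limfunc{tr}\bigl(\phi(|A|^{2}T)\bigr)\bigr|.
\]
Since $\phi_{a}=\phi$ when the coefficients are nonnegative, Theorem \ref{t.5.2} already supplies $\Sigma(\phi)\geq 0$. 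Inequality (\ref{e.5.14}) is then precisely $\Sigma(f+g)\geq \Sigma(f)+\Sigma(g)$, while (\ref{e.5.15}) is $\Sigma(f)\geq \Sigma(g)$ whenever $p_{j}\geq q_{j}$ for every $j$.

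To establish the superadditivity I would abbreviate $X_{\phi}:=\limfunc{tr}(\phi(|A|^{2}|T|^{2\alpha}))$, $Y_{\phi}:=\limfunc{tr}(\phi(|A|^{2}|T|^{2(1-\alpha)}))$, and $Z_{\phi}:=\limfunc{tr}(\phi(|A|^{2}T))$. By linearity and continuity of the trace on $\mathcal{B}_{1}(H)$, and because the series for $\phi(|A|^{2}T)$ and its two companions converge in $\mathcal{B}_{1}(H)$ (as already exploited in the proof of Theorem \ref{t.5.2}), the maps $\phi\mapsto X_{\phi}$, $\phi\mapsto Y_{\phi}$, $\phi\mapsto Z_{\phi}$ are linear in the coefficient sequence; hence $X_{f+g}=X_{f}+X_{g}$, $Y_{f+g}=Y_{f}+Y_{g}$, $Z_{f+g}=Z_{f}+Z_{g}$. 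The inequality $\Sigma(f+g)\geq \Sigma(f)+\Sigma(g)$ then collapses, exactly as in the proof of Theorem \ref{t.3.2}(i), to the elementary bound
\[
\sqrt{(a^{2}+b^{2})(c^{2}+d^{2})}\geq ac+bd\quad(a,b,c,d\geq 0)
\]
applied with $a^{2}=X_{f}$, $b^{2}=X_{g}$, $c^{2}=Y_{f}$, $d^{2}=Y_{g}$, together with the triangle inequality $|Z_{f}+Z_{g}|\leq |Z_{f}|+|Z_{g}|$.

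Monotonicity, i.e.\ (\ref{e.5.15}), then follows by the standard superadditivity trick from Theorem \ref{t.3.2}(ii): if $p_{j}\geq q_{j}$ for all $j$, set $h(z):=\sum_{j=0}^{\infty}(p_{j}-q_{j})z^{j}$, whose coefficients are nonnegative and dominated by those of $f$, so $h$ converges on $D(0,R)$ and inherits the trace hypotheses of Theorem \ref{t.5.2} from $f$ by comparison. Writing $f=h+g$ and applying the just-established superadditivity yields $\Sigma(f)\geq \Sigma(h)+\Sigma(g)\geq \Sigma(g)$, the last step using $\Sigma(h)\geq 0$ from Theorem \ref{t.5.2}.

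The main obstacle is bookkeeping rather than algebra: one must verify that every infinite sum commutes with $\limfunc{tr}(\cdot)$ and that $h$ really does meet the hypotheses of Theorem \ref{t.5.2}. Both points are handled by the $\mathcal{B}_{1}(H)$-convergence already established for $f$ and $g$ together with the coefficientwise domination $0\leq p_{j}-q_{j}\leq p_{j}$; no further analytic input is required beyond what Theorem \ref{t.5.2} has already put in place.
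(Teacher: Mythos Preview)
Your proposal is correct and follows essentially the same approach as the paper. The paper indicates that the proof uses the superadditivity and monotonicity of the functional $\sigma_{\mathbf{A},\mathbf{T},\alpha}$ from Proposition~\ref{p.4.2} together with the limiting argument of Theorem~\ref{t.5.2}; you instead pass to the limit first and then apply the identical elementary algebra (the inequality $\sqrt{(a^{2}+b^{2})(c^{2}+d^{2})}\geq ac+bd$ and the triangle inequality, followed by the $f=h+g$ decomposition) directly to the scalar traces, which is a harmless reordering of the same ingredients.
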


The proof follows in a similar way to the proof of Theorem \ref{t.5.2} by
making use of the superadditivity and monotonicity properties of the
functional $\sigma _{\mathbf{A},\mathbf{T,}\alpha }\left( \mathbf{\cdot }%
\right) .$ We omit the details.

\begin{example}
\label{Ex.3.2}Now, observe that if we take%
\begin{equation*}
f\left( \lambda \right) =\sinh \lambda =\sum_{n=0}^{\infty }\frac{1}{\left(
2n+1\right) !}\lambda ^{2n+1}
\end{equation*}%
and 
\begin{equation*}
g\left( \lambda \right) =\cosh \lambda =\sum_{n=0}^{\infty }\frac{1}{\left(
2n\right) !}\lambda ^{2n}
\end{equation*}%
then 
\begin{equation*}
f\left( \lambda \right) +g\left( \lambda \right) =\exp \lambda
=\sum_{n=0}^{\infty }\frac{1}{n!}\lambda ^{n}
\end{equation*}
for any $\lambda \in \mathbb{C}$.

If $T\in \mathcal{B}\left( H\right) ,$ $A\in \mathcal{B}_{2}\left( H\right) $
are normal operators that double commute and $\alpha \in \left[ 0,1\right] , 
$ then by (\ref{e.5.14}) we have%
\begin{align}
& \left[ \limfunc{tr}\left( \exp \left( \left\vert A\right\vert
^{2}\left\vert T\right\vert ^{2\alpha }\right) \right) \right] ^{1/2}\left[ 
\limfunc{tr}\left( \exp \left( \left\vert A\right\vert ^{2}\left\vert
T\right\vert ^{2\left( 1-\alpha \right) }\right) \right) \right] ^{1/2}
\label{e.5.16} \\
& -\left\vert \limfunc{tr}\left( \exp \left( \left\vert A\right\vert
^{2}T\right) \right) \right\vert  \notag \\
& \geq \left[ \limfunc{tr}\left( \sinh \left( \left\vert A\right\vert
^{2}\left\vert T\right\vert ^{2\alpha }\right) \right) \right] ^{1/2}\left[ 
\limfunc{tr}\left( \sinh \left( \left\vert A\right\vert ^{2}\left\vert
T\right\vert ^{2\left( 1-\alpha \right) }\right) \right) \right] ^{1/2} 
\notag \\
& -\left\vert \limfunc{tr}\left( \sinh \left( \left\vert A\right\vert
^{2}T\right) \right) \right\vert  \notag \\
& +\left[ \limfunc{tr}\left( \cosh \left( \left\vert A\right\vert
^{2}\left\vert T\right\vert ^{2\alpha }\right) \right) \right] ^{1/2}\left[ 
\limfunc{tr}\left( \cosh \left( \left\vert A\right\vert ^{2}\left\vert
T\right\vert ^{2\left( 1-\alpha \right) }\right) \right) \right] ^{1/2} 
\notag \\
& -\left\vert \limfunc{tr}\left( \cosh \left( \left\vert A\right\vert
^{2}T\right) \right) \right\vert \left( \geq 0\right) .  \notag
\end{align}

Now, consider the series $\frac{1}{1-\lambda }=\sum_{n=0}^{\infty }\lambda
^{n},$ $\lambda \in D\left( 0,1\right) $ and $\ln \frac{1}{1-\lambda }%
=\sum_{n=1}^{\infty }\frac{1}{n}\lambda ^{n},$ $\lambda \in D\left(
0,1\right) $ and define $p_{n}=1,$ $n\geq 0,$ $q_{0}=0,$ $q_{n}=\frac{1}{n},$
$n\geq 1,$ then we observe that for any $n\geq 0$ we have $p_{n}\geq q_{n}.$

If $T\in \mathcal{B}\left( H\right) ,$ $A\in \mathcal{B}_{2}\left( H\right) $
are normal operators that double commute, $\alpha \in \left[ 0,1\right] $
and $\limfunc{tr}\left( \left\vert A\right\vert ^{2}\left\vert T\right\vert
^{2\alpha }\right) ,$ $\limfunc{tr}\left( \left\vert A\right\vert
^{2}\left\vert T\right\vert ^{2\left( 1-\alpha \right) }\right) <1,$ then by
(\ref{e.5.15}) we have%
\begin{align}
& \left[ \limfunc{tr}\left( \left( 1_{H}-\left\vert A\right\vert
^{2}\left\vert T\right\vert ^{2\alpha }\right) ^{-1}\right) \right] ^{1/2}%
\left[ \limfunc{tr}\left( \left( 1_{H}-\left\vert A\right\vert
^{2}\left\vert T\right\vert ^{2\left( 1-\alpha \right) }\right) ^{-1}\right) %
\right] ^{1/2}  \label{e.3.16} \\
& -\left\vert \limfunc{tr}\left( \left( 1_{H}-\left\vert A\right\vert
^{2}T\right) ^{-1}\right) \right\vert   \notag \\
& \geq \left[ \limfunc{tr}\left( \ln \left( 1_{H}-\left\vert A\right\vert
^{2}\left\vert T\right\vert ^{2\alpha }\right) ^{-1}\right) \right] ^{1/2}%
\left[ \limfunc{tr}\left( \ln \left( 1_{H}-\left\vert A\right\vert
^{2}\left\vert T\right\vert ^{2\left( 1-\alpha \right) }\right) ^{-1}\right) %
\right] ^{1/2}  \notag \\
& -\left\vert \limfunc{tr}\left( \ln \left( 1_{H}-\left\vert A\right\vert
^{2}T\right) ^{-1}\right) \right\vert \left( \geq 0\right) .  \notag
\end{align}
\end{example}

\end{document}